\newtheorem{theorem}{Theorem}[section]
\newtheorem{definition}[theorem]{Definition}
\newtheorem{lemma}[theorem]{Lemma}
\newtheorem{remark}[theorem]{Remark}
\newtheorem{corollary}[theorem]{Corollary}
\newcounter{mynotes}
\newcommand{\remove}[1]{}
\newcommand{\R}[0]{{\ensuremath{\mathbb{R}}}}
\newcommand{\bX}[0]{\boldsymbol{X}}
\newcommand{\bZ}[0]{\boldsymbol{Z}}
\newcommand{\bx}[0]{\boldsymbol{x}}
\newcommand{\bI}{\mathbf{I}}
\newcommand{\x}{\mathbf{x}}
\newcommand{\y}{\mathbf{y}}
\newcommand{\J}{\mathbf{J}}
\newcommand{\tx}{\widetilde{\x}}
\newcommand{\bz}[0]{\boldsymbol{z}}
\newcommand{\bi}{\mathbf{i}}
\DeclareMathOperator{\proj}{proj}
\newcommand{\projb}{\proj_{bin}}
\newcommand{\altProj}{\textrm{AltProj}}
\newcommand{\perturb}[1]{\textsc{RandWalkSAT}_{#1}}
\newcommand{\lproj}{\textrm{$\ell_1$-proj}}
\newcommand{\round}{\textrm{round}}
\newcommand{\flip}{\textrm{flip}}
\newcommand{\supp}{\textrm{supp}}
\newcommand{\certsup}{\textrm{certSupp}}
\newcommand{\wt}[1]{\widetilde{#1}}
\newcommand{\wSAT}{\textsc{WalkSAT}\xspace}
\newcommand{\mbw}{\textsc{mbWalkSAT}\xspace}
\newcommand{\FPorig}{\textsc{FPorig}\xspace}
\newcommand{\WFP}{\textsc{WFP}\xspace}
\newcommand{\WFPbase}{\textsc{WFPbase}\xspace}
\newcommand{\mqed}{}
\begin{document}

\title{Improving the Randomization Step in Feasibility Pump}


\author[1]{Santanu S. Dey\thanks{santanu.dey@isye.gatech.edu}}
\author[1]{Andres Iroume\thanks{airoume3@gatech.edu}}
\author[2]{Marco Molinaro\thanks{molinaro.marco@gmail.edu}}
\author[3]{Domenico Salvagnin\thanks{dominiqs@gmail.com}}
\affil[1]{School of Industrial and Systems Engineering, Georgia Institute of Technology, Atlanta, United States}
\affil[2]{Computer Science Department, PUC-Rio, Brazil}
\affil[3]{IBM Italy and DEI, University of Padova, Italy}	
	%


%

%
%

\maketitle

\begin{abstract}
Feasibility pump (FP) is a successful primal heuristic for mixed-integer linear programs (MILP). The algorithm consists of three main components: rounding fractional solution to a mixed-integer one, projection of infeasible solutions to the LP relaxation, and a randomization step used when the algorithm stalls. While many generalizations and improvements to the original Feasibility Pump have been proposed, they mainly focus on the rounding and projection steps.

We start a more in-depth study of the randomization step in Feasibility Pump. For that, we propose a new randomization step based on the WalkSAT algorithm for solving SAT instances. First, we provide \emph{theoretical analyses} that show the potential of this randomization step; to the best of our knowledge, this is the first time any theoretical analysis of running-time of Feasibility Pump or its variants has been conducted. Moreover, we also conduct computational experiments incorporating the proposed modification into a state-of-the-art Feasibility Pump code that reinforce the practical value of the new randomization step.

\end{abstract}



\section{Introduction}

Primal heuristics are used within mixed-integer linear programming (MILP) solvers for finding good integer feasible solutions quickly~\cite{lodiF:2011}. \emph{Feasibility pump} (FP) is a very successful primal heuristic for mixed-binary LPs that was introduced in~\cite{FischettiGL05}. At its core, Feasibility Pump is an \emph{alternating projection method}, as described below.

\begin{algorithm}[H]
\caption{Feasibility Pump (Na{\"i}ve version)}
\label{hello}
		\begin{algorithmic}[1]
		\State \textbf{Input:} mixed-binary LP (with binary variables $x$ and continuous variables $y$)

    \smallskip
    	\State Solve the linear programming relaxation, and let ${(\bar{x}, \bar{y})}$ be an optimal solution
				\While{$\bar{x}$ is not integral}
					\State (Round) Round each coordinate of $\bar{x}$ to the closest integer, call the obtained vector $\wt{x}$	\label{alg:naiveRound}
					\State (Project) Let $(\bar{x}, \bar{y})$ be the point in the LP relaxation that minimizes
					$\sum_i |{x}_i - \wt{x}_i|$ \label{alg:naiveProj}
				\EndWhile
				\State Return $(\bar{x}, \bar{y})$
    \end{algorithmic}
  \end{algorithm}
The scheme presented above may \emph{stall}, since the same infeasible integer point may be visited in Step \ref{alg:naiveRound} at different iterations. Whenever this happens, the paper~\cite{FischettiGL05} recommends a \emph{randomization step}, that after Step \ref{alg:naiveRound} flips the value of some of the binary variables as follows: Defining the \emph{fractionality} of variable $x_i$ as $|\bar{x}_i - \tilde{x}_i|$ and let $NN$ be the number of variables with positive fractionality, randomly generate a positive integer $TT$ and flip $\textup{min}\{TT, NN\}$ variables with largest fractionality.


Together with a few other tweaks, this surprisingly simple method works very well. On MIPLIB 2003 instances, FP finds feasible solutions for $96.3\%$ of the instances in reasonable time~\cite{FischettiGL05}.

Due to its success, many improvements and generalizations of FP (both for MILPs and mixed integer non-linear programs(MINLPs)) have been studied~\cite{AchterbergB07,BertaccoFL07,BonamiCLM09,FischettiS09,santis:lu:ri:2010,DAmbrosioFLL10,BolandEET12,DAmbrosioFLL12,BolandEEFST14}.
However, the focus of these improvements has been on the projection and rounding steps or generalization for MINLPs; to the best of our knowledge, they use essentially the same randomization step as proposed in the original algorithm~\cite{FischettiGL05} (and its generalization to the general integer MILP case of \cite{BertaccoFL07}).

Moreover, even though FP is so successful and so many variants have been proposed, there is very limited theoretical analysis of its properties~\cite{BolandEET12}. In particular, to the best of our knowledge there is no known bounds on expected running-time of FP.

\section{Our contributions} \label{sec:contrib}

In this paper, we start a more in-depth study of the randomization step in Feasibility Pump. For that, we propose a new randomization step $\perturb{\ell}$ and provide both \emph{theoretical analysis} as well as \emph{computational experiments} in a state-of-the-art Feasibility Pump code that show the potential of this method.

\paragraph{Theoretical justification of $\perturb{\ell}$.}

The new randomization step $\perturb{\ell}$
is inspired by the classical algorithm \emph{\wSAT}~\cite{Schoning99} for solving SAT instances (see also~\cite{Papadimitriou91,MintonJPL92}).
The key idea of $\perturb{\ell}$ is that whenever Feasibility Pump stalls, namely an infeasible mixed-binary solution is revisited, it should flip a binary variable that participates in an \emph{infeasible constraint}. More precisely, $\perturb{\ell}$ constructs a \emph{minimal (projected) infeasibility certificate} for this solution and \emph{randomly picks} a binary variable in it to be flipped (see Section \ref{sec:WalkSAT} for exact definitions).

	While the vague intuition that such randomization is trying to ``fix'' the infeasible constraint is clear, we go further and provide theoretical analyses that formally justify this and highlight more subtle advantageous properties of $\perturb{\ell}$.

	First, we analyze what happens if we simply repeatedly use \emph{only} the new proposed randomization step $\perturb{\ell}$, which gives a simple primal heuristic that we denote by \mbw. Not only we show that \mbw is guaranteed to find a solution if one exists, but its behavior is related to the \emph{(almost) decomposability} and \emph{sparsity} of the instance. To make this precise, consider a decomposable mixed-binary set with $k$ blocks:
	\begin{gather}
		P^I = P^I_1 \times \ldots \times P^I_k \textrm{, where for all $i \in [k]$ we have} \notag\\
		P^I_i = P _i \cap (\{0,1\}^{n_i} \times \R^{d_i}) \textrm{, } P_i = \{(x^i,y^i) \in [0,1]^{n_i} \times \R^{d_i} : A^i x^i + B^i y^i \le b^i\}. \label{eq:decomp}\\
		\textrm{Let } P = P_1 \times \ldots \times P_k \textrm{ denote the LP relaxation of $P^I$}. \notag
	\end{gather}
	Note that since we allow $k=1$, this also captures a general mixed-binary set. We then have the following running-time guarantee for the primal heuristic \mbw.

	\begin{theorem} \label{thm:decomp}
		Consider a feasible decomposable mixed-binary set as in equation \eqref{eq:decomp}. Let $s_i$ be such that each constraint in $P_i^I$ has at most $s_i$ binary variables, and define $c_i := \min\{ s_i \cdot (d_i + 1), n_i\}$. 		Then with probability at least $1-\delta$, \mbw with parameter $\ell=1$ returns a feasible solution within $\ln(k/\delta)\, \sum_i n_i \, 2^{n_i \log c_i}$ iterations. In particular, this bound is at most $\bar{n} k \, 2^{\bar{n} \log \bar{n}} \cdot \ln(k/\delta)$, where $\bar{n} = \max_i n_i$.
	\end{theorem}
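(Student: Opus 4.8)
The plan is to reduce the whole analysis to a per-block argument in the spirit of Sch\"oning's analysis of \wSAT, and then glue the blocks together with a union bound.

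First I would observe that the product structure is preserved by the randomization dynamics. Since the rows defining $P_i$ involve only the variables of block $i$, any minimal projected infeasibility certificate for the current rounded point $\tx$ lies inside a single block, and that block must be infeasible at that moment: if block $j$ were feasible, its current binary part would extend to a feasible continuous part, so no subset of block $j$'s rows can certify infeasibility. Hence $\perturb{1}$ flips, at each iteration, exactly one binary variable of one currently-infeasible block, and once a block becomes feasible it is never touched again and stays feasible. Writing $T_i$ for the (random) number of iterations in which $\perturb{1}$ works on block $i$, the total number of iterations until the whole point is feasible is exactly $\sum_i T_i$, so it suffices to bound each $T_i$.

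Next comes the key lemma, the analog of the ``distance decreases with probability at least $1/c_i$'' step. Fix any feasible $(x^{*,i},y^{*,i}) \in P_i^I$. Whenever $\perturb{1}$ works on block $i$ (its binary part $x$ being infeasible for block $i$) and picks a minimal certificate $S$, the new binary part $x'$ satisfies $\|x'-x^{*,i}\|_1 = \|x-x^{*,i}\|_1 - 1$ with probability at least $1/c_i$. Indeed, if $x$ and $x^{*,i}$ agreed on every binary variable occurring in $S$, the continuous system obtained from the rows of $S$ after substituting $x$ would coincide with the one obtained after substituting $x^{*,i}$, which is satisfied by $y^{*,i}$, contradicting that $S$ certifies infeasibility of $x$; so $x$ and $x^{*,i}$ differ on at least one binary variable of $S$, and flipping it decreases the distance. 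Since $\perturb{1}$ picks the flipped variable uniformly among the binary variables of $S$, of which there are at most $c_i$, this has probability at least $1/c_i$. The bound $c_i = \min\{s_i(d_i+1), n_i\}$ itself follows because a minimal infeasible subsystem of linear inequalities in $\R^{d_i}$ has at most $d_i+1$ members (Helly / LP duality), each carrying at most $s_i$ binary variables, and trivially at most $n_i$ binary variables appear in $P_i$ at all; here one also uses feasibility of $P_i^I$ to rule out a certificate with no binary variable.

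Finally I would convert this into the iteration bound. Restricting attention to the subsequence of iterations that work on block $i$, the lemma gives that at each such step the Hamming distance to $x^{*,i}$ drops by one with conditional probability at least $1/c_i$, regardless of what happened on the other blocks in between (a routine optional-skipping / stopping-time remark). Since the distance is always at most $n_i$, in any window of $n_i$ consecutive block-$i$ steps the walk reaches distance $0$ — i.e. block $i$ becomes feasible — with probability at least $(1/c_i)^{n_i} = 2^{-n_i\log c_i}$, uniformly over the current state; hence the number of such windows needed is stochastically dominated by a geometric random variable with success probability $2^{-n_i\log c_i}$, giving $\Pr[\,T_i > \ln(k/\delta)\,n_i\,2^{n_i\log c_i}\,] \le (1-2^{-n_i\log c_i})^{\ln(k/\delta)\,2^{n_i\log c_i}} \le \delta/k$. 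A union bound over $i \in [k]$ yields that with probability at least $1-\delta$ all $T_i$ meet their bounds, so $\sum_i T_i \le \ln(k/\delta)\sum_i n_i\,2^{n_i\log c_i}$; the ``in particular'' estimate follows from $c_i \le n_i \le \bar n$ and there being $k$ blocks. I expect the main obstacle to be the combinatorial content behind $c_i$ — fixing the right notion of minimal projected infeasibility certificate and the $d_i+1$ bound on a minimal infeasible continuous subsystem — together with the care needed to argue that the per-step probability from the key lemma still applies along the block-$i$ subsequence when blocks are processed in an interleaved, history-dependent order.
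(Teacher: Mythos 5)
Your proposal is correct and takes essentially the same route as the paper: minimality of the projected certificate localizes it to a single (currently infeasible) block, a Sch\"oning-style Hamming-distance argument to a fixed feasible $x^{*,i}$ gives a per-step success probability at least $1/c_i$ (with $c_i$ coming from the Helly/Carath\'eodory bound of $d_i+1$ rows times sparsity $s_i$), and the blocks are combined by analyzing the block-$i$ subsequence in windows of $n_i$ steps and taking a union bound. The only small imprecision is that $\perturb{1}$ samples from $\supp(\lambda A)$, the support of the \emph{aggregated} inequality, which may be strictly smaller than the set of binary variables appearing in the certificate's rows; the differing coordinate must be located in $\supp(\lambda A)$ itself, which follows immediately because $x^{*,i}$ satisfies the aggregated certificate while the current point violates it, so your key lemma and the bound $|\supp(\lambda A)|\le c_i$ go through unchanged.
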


 There are a few interesting features of this bound that indicates good properties of the proposed randomization step, apart from the fact that it is already able to find feasible solutions by itself. First, it depends on the \emph{sparsity} $s_i$ of the blocks, giving better running times on sparser problems. More importantly, the bound indicates that the algorithm works almost \emph{independently} on each of the blocks, that is, it just takes about $2^{n_i}$ iterations to find a solution for each of the blocks, instead of $2^{n_1 + \ldots + n_k}$ of a complete enumeration over the whole problem. In fact, the proof of Theorem \ref{thm:decomp} makes explicit this almost independence of the algorithm over the blocks, and motivates the uses of \emph{minimal} infeasibility certificates. Moreover, we note the important point that the algorithm is not provided the knowledge of the decomposability of the instance, it just \emph{automatically} runs ``fast'' when the problem is decomposable. This gives some indication that the proposed randomization could still exhibit good behavior on the \emph{almost decomposable} instances often found in practice (see discussion in~\cite{dey:molinaro:wang:2016}).

\paragraph{$\perturb{\ell}$ in conjunction with FP.}
 	Next, we analyze $\perturb{\ell}$ in the context of Feasibility Pump by adding it as a randomization step to the Na\"ive Feasbility Pump algorithm (Algorithm \ref{hello}); we call the resulting algorithm \WFP. This now requires understanding the complicated interplay of the randomization, rounding and projection steps: While in practice rounding and projection greatly help finding feasible solutions, their worst-case behavior is difficult to analyze and in fact they could take the iterates far away from feasible solutions. Although the general case is elusive at this point, we are nonetheless able to analyze the running time of \WFP for \emph{decomposable subset-sum} instances.

 \begin{definition}
 	A \emph{separable subset-sum set} is one of the form
 	\begin{align}
 		\{(x^1, x^2, \ldots, x^k) \in \{0,1\}^{n_1 + n_2 + \ldots + n_k}: a^i x^i = b_i ~~\forall i\} \label{eq:subset}
 	\end{align} for non-negative $(a^i,b_i)$'s.
 \end{definition}

 While this may seem like a simple class of problems, on these instances Feasibility Pump with the original randomization step from \cite{FischettiGL05} (without restarts) may not even converge, as illustrated next.

 \begin{remark}
		Consider the feasible subset-sum problem
		\begin{align*}
			\max ~& x_2\\
			 s.t. ~& 3 x_1 + x_2 = 3\\
			 & x_1, x_2 \in \{0,1\}.
		\end{align*}
		Consider the execution of the original Feasibility Pump algorithm (without restarts). The starting point is an optimal LP solution; without loss of generality, suppose it is the solution $(\frac{2}{3}, 1)$. This solution is then rounded to the point $(1, 1)$, which is infeasible. This point is then $\ell_1$-projected to the LP, giving back the point $(\frac{2}{3}, 1)$, which is then rounded again to $(1, 1)$. At this point the algorithm has stalled and applies the randomization step. Since only variable $x_2$ has strictly positive fractionality $|\frac{2}{3} - 1| = \frac{1}{3}$, only the first coordinate of $(1,1)$ is a candidate to be flipped. So suppose this coordinate  is flipped. The infeasible point $(0,1)$ obtained is then $\ell_1$-projected to the LP, giving again the point $(\frac{2}{3}, 1)$. This sequence of iterates repeats indefinitely and the algorithm does not find the feasible solution $(1,0)$.
 \end{remark}

The issue in this example is that the original randomization step never flips a variable with zero fractionality. Moreover, in Section \ref{app:3choice} of the appendix we show that even if such flips are considered, there is a more complicated subset-sum instance where the algorithm stalls.

On the other hand, we show that algorithm \WFP with the proposed randomization step always finds a feasible solution of feasible subset-sum instances, and moreover its running time again depends on the sparsity and the decomposability of the instance (in order to simplify the proof, we assume that $\wt{x} \notin P$, then $\lproj(P, \wt{x})$ is a vertex of $P$; notice that since $\lproj(P, \wt{x})$ is a linear programming problem and subset-sum instances are bounded, there is always a vertex satisfying the desired properties from $\lproj$).

\begin{theorem} \label{thm:WFP}
Consider a feasible separable subset-sum set $P$ as in \eqref{eq:subset}. 
Then with probability at least $1-\delta$, \WFP with $\ell = 2$ returns a feasible solution within $T = \lceil\ln(k/\delta)\rceil\, \sum_i n_i \, 2^{2n_i \log n_i}  \le \bar{n} k \, 2^{2 \bar{n} \log \bar{n}} \cdot \ln(k/\delta)$ iterations, where $\bar{n} = \max_i n_i$.
\end{theorem}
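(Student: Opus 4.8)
The plan is to mirror the \wSAT/Sch\"oning-style analysis behind Theorem~\ref{thm:decomp} --- a biased random walk on Hamming distances --- while additionally taming the (deterministic but a priori unhelpful) rounding and projection steps of \WFP. The first move is to reduce to a single block. Since $P=P_1\times\cdots\times P_k$ is a product and the $\ell_1$ objective of \lproj\ separates over coordinates, both \lproj\ and the coordinatewise rounding act independently on the $k$ blocks, and a minimal (projected) infeasibility certificate of an integral $\wt x\notin P^I$ is supported on the variables of a single violated block, so every application of $\perturb{2}$ flips variables inside one block only. Moreover, once $\wt x^i$ satisfies $a^i\wt x^i=b_i$ it lies in $P_i$, so it is fixed by projection and rounding and is never selected by the randomization again. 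Hence the iterates of block $i$ form an autonomous stochastic process; it suffices to bound, for each $i$, the number of iterations until block $i$ is feasible, and then take a union bound over the $k$ blocks with parameter $\delta/k$ (which produces the $\ln(k/\delta)$ factor).

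Fix any feasible point $x^{i,*}$ of block $i$ and let $\phi_i$ be the Hamming distance from block $i$'s current integral iterate to $x^{i,*}$, so $\phi_i\le n_i$. Whenever $\perturb{2}$ acts on block $i$ its constraint is violated, hence $a^i\wt x^i\neq b_i=a^ix^{i,*}$ and $\wt x^i$ disagrees with $x^{i,*}$ on some variable of that constraint; since the certificate holds at most $n_i$ variables, each of the two flips of $\perturb{2}$ hits such a ``wrong'' variable --- decreasing $\phi_i$ by one --- with probability at least $1/n_i$, so with probability at least $n_i^{-2}$ a given randomization decreases $\phi_i$ by $2$ via its flips. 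The more delicate point is to bound the countervailing effect of projection and rounding: between two consecutive randomizations of block $i$ the block only undergoes projection-then-rounding steps, and using the hypothesis that \lproj\ returns a vertex of $P_i$ --- which, for the single-equality polytope $P_i$, has at most one fractional coordinate --- one argues that only a bounded number of iterations separate consecutive randomizations of a still-infeasible block (the rounding of a projected vertex lies within $\ell_1$-distance $\tfrac12$ of $P_i$, so the next projection essentially reproduces the same vertex and the algorithm stalls again) and that such an interlude changes $\phi_i$ by at most one. Thus a randomization whose two flips are both good gives a net decrease of $\phi_i$ by at least $2-1=1$; this is exactly where $\ell=2$ enters, since with a single flip the projection could cancel all progress, precisely as the instance in the Remark shows.

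It then remains a standard epoch argument. Starting from $\phi_i\le n_i$, $n_i$ consecutive randomizations of block $i$ all of whose flips are good drive $\phi_i$ to $0$ --- block $i$ becomes feasible, at $x^{i,*}$ or earlier at another feasible point --- with probability at least $(n_i^{-2})^{n_i}=2^{-2n_i\log n_i}$. Grouping block $i$'s run into epochs of $n_i$ randomizations, each epoch spans $O(n_i)$ iterations and succeeds independently with probability at least $2^{-2n_i\log n_i}$; hence after $O\!\big(2^{2n_i\log n_i}\ln(k/\delta)\big)$ epochs, i.e.\ within $O\!\big(n_i\,2^{2n_i\log n_i}\ln(k/\delta)\big)$ iterations, block $i$ is feasible except with probability $\delta/k$, which lies inside the claimed per-block budget $\lceil\ln(k/\delta)\rceil\,n_i\,2^{2n_i\log n_i}$. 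A union bound over the $k$ blocks together with $n_i\le\bar{n}$ then yields the theorem.

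I expect the genuine obstacle to be the projection-control step: ruling out that a single round-and-project step drags block $i$'s iterate far from $x^{i,*}$ --- a priori the $\ell_1$-projection can jump to a distant vertex and roughly double $\phi_i$ --- and showing instead that near-integrality of the vertices of a subset-sum polytope confines the damage to one Hamming unit per randomization. A secondary point to nail down is the bookkeeping that an infeasible block keeps receiving randomizations (again because unresolved infeasibility forces stalls), so that the epoch count in the last step is legitimate.
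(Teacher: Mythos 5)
Your overall architecture matches the paper's: reduce to a single block because \lproj, \round\ and minimal certificates act blockwise, track the Hamming distance to a fixed feasible $x^*$, show each stall-plus-perturbation has probability at least $n^{-2}$ of making net progress, and finish with an epoch/union-bound argument. However, the central step of your argument --- ``with probability at least $n_i^{-2}$ both flips of $\perturb{2}$ hit wrong coordinates, and the interleaved projection-plus-rounding changes the iterate by at most one Hamming unit, so the net decrease is at least $2-1=1$'' --- is exactly the point you yourself flag as the obstacle, and as stated it is not true. If the two wrong coordinates you flip lie on the \emph{same} side (which is forced whenever $\supp(\wt{x})\subsetneq\supp(x^*)$ or vice versa), flipping both can overshoot or undershoot the equality $ax=b$ by an amount unrelated to $1$, the $\ell_1$-projection objective from the flipped point need not be less than $1$, and there is no bound of one coordinate on how much $\altProj$ subsequently moves the point; nothing in your sketch rules out that it undoes both flips or worse. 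The paper's Lemma \ref{lemma:2choice} avoids this by choosing the ``lucky'' set $I$ \emph{adaptively}: when the supports are nested it flips a \emph{single} well-chosen coordinate and uses the fact that a stalling point has $\lproj$ objective less than $\tfrac12$ (Claim 1 there) to conclude the flipped point is a fixed point of $\altProj$ (Claim 2, part 1); only in the crossing case does it flip two coordinates, and specifically one from $\supp(x^*)\setminus\supp(\wt{x})$ and one from $\supp(\wt{x})\setminus\supp(x^*)$, which sandwiches $b$ and forces the $\lproj$ objective below $1$, so $\altProj$ changes at most one coordinate (Claim 2, part 2). Your uniform ``two good flips'' event does not deliver this, so the per-randomization progress probability $n_i^{-2}$ is not established.

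A secondary gap: your justification of the stalling/bookkeeping structure (``the rounding of a projected vertex lies within $\ell_1$-distance $\tfrac12$ of $P_i$, so the next projection essentially reproduces the same vertex'') is asserted, not proved, and the stated reason is not quite right --- the next $\ell_1$-projection need not return the same vertex. The paper needs a genuine case analysis (Lemma \ref{lemma:stabAltProj}, comparing the coefficients adjacent to the unique fractional coordinate) to show that $\altProj$ is idempotent on subset-sum instances; in some cases the projection lands on a \emph{different} point that merely rounds back to the same integer vector. That idempotence, plus the explicit coupling with the compressed algorithm (Lemma \ref{lemma:coupling}), is what legitimizes your ``each infeasible block keeps stalling and receiving randomizations, at a cost of at most a factor $2$ in iterations'' accounting. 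So the skeleton is right, but the two load-bearing lemmas --- the adaptive two-flip progress lemma and the idempotence of projection-plus-rounding --- are missing from your proposal rather than proved.
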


	To the best of our knowledge this is the first theoretical analysis of the running-time of a variant of Feasibility Pump algorithm, even for a special class of instances. As in the case of repeatedly using just $\perturb{\ell}$, the algorithm \WFP essentially works independently on each of the blocks (inequalities) of the problem, and has reduced running time on sparser instances.

The high-level idea of the proof Theorem \ref{thm:WFP} is to: 1) Show that the combination of projection plus rounding is \emph{idempotent} for these instances, namely applying them once or repeatedly yields the same effect (Lemma \ref{lemma:stabAltProj}); 2) Show that a round of randomization step plus projection plus rounding has a non-zero probability of generating an iterate closer to a feasible solution (Lemma \ref{lemma:2choice}).

\paragraph{Computational experiments.} While the analyses above give insights on the usefulness of using $\perturb{\ell}$ in the randomization step of FP, in order to attest its practical value it is important to understand how it interacts with complex engineering components present in current Feasibility Pump codes. To this end, we considered the state-of-the-art code of~\cite{FischettiS09} and modified its randomization step based on $\perturb{\ell}$. While the full details of the experiments are presented in Section \ref{sec:computation}, we summarize some of the main findings here.

We conducted experiments on MIPLIP~2010~\cite{MIPLIB2010} instances and on randomly generated two-stage stochastic models. In the first testbed there was a small but consistent improvement in both running-time and number of iterations. More importantly, the success rate of the heuristic improved consistently. In the second testbed, the new algorithm performs even better, according to all measures. It is somewhat surprising that our small modification of the randomization step could provide noticeable improvements over the code in~\cite{FischettiS09}, specially considering that it already includes several improvements over the original Feasibility Pump (e.g. constraint propagation). In addition, the proposed modification is generic and could be easily incorporated in essentially any Feasibility Pump code. Moreover, for virtually all the seeds and instances tested the modified algorithm performed better than the original version in~\cite{FischettiS09}; this indicates that, in practice, the modified randomization step dominates the previous one.

The rest of the paper is organized as follows: Section~\ref{sec:WalkSAT} we discuss and present out analysis of the proposed randomization scheme $\perturb{\ell}$, Section~\ref{sec:toyFPWalkSAT} presents the analysis of the new randomization scheme $\perturb{\ell}$ in conjunction with feasibility pump, and Section~\ref{sec:computation} describes details of our empirical experiments. 

\medskip
\noindent \textbf{Notation.}	We use $\R_+$ to denote the non-negative reals, and $[k] := \{1, 2, \ldots, k\}$. For a vector $v \in \R^n$, we use $\supp(v) \subseteq [n]$ to denote its support, namely the set of coordinates $i$ where $v_i \neq 0$. We also use $\|v\|_0 = |\supp(v)|$, and $\|v\|_1 = \sum_i |v_i|$ to denote the $\ell_1$ norm.

\section{New randomization step $\perturb{\ell}$}\label{sec:WalkSAT}

\subsection{Description of the randomization step}

	We start by describing the \wSAT algorithm~\cite{Schoning99}, that serves as the inspiration for the proposed randomization step $\perturb{\ell}$, in the context of pure-binary linear programs.	The vanilla version of \wSAT starts with a random  point $\bar{x} \in \{0,1\}^n$; if this point is feasible, the algorithm returns it, and otherwise selects any constraint violated by it. The algorithm then select a random index $i$ from the support of the selected constraint and flips the value of the entry $\bar{x}_i$ of the solution. This process is repeated until a feasible solution is obtained. It is known that this simple algorithm finds a feasible solution in expected time at most $2^n$ (see \cite{mitz} for a proof for 3-SAT instances), and Sch\"oning~\cite{Schoning99} showed that if the algorithm is restarted at every $3n$ iterations, a feasible solution is found in expected time at most a polynomial factor from $(2(1-\frac{1}{s}))^n$, where $s$ is the largest support size of the constraints.

	Based on this \wSAT algorithm, to obtain a randomization step for mixed-binary problems we are going to work on the projection onto the binary variables, so instead of looking for violated constraints we look for a \emph{certificate of infeasibility} in the space of binary variables. Importantly, we use a \textbf{minimal} certificate, which makes sure that for decomposable instances the certificate does not ``mix'' the different blocks of the problem.

Now we proceed with a formal description of the proposed randomization step $\perturb{\ell}$. Consider a mixed-binary set
\begin{gather}
	\hspace{-3pt}P^I = P \cap (\{0,1\}^n \times \R^d), \textrm{ where }
	P = \{(x,y) \in [0,1]^n \times \R^d : Ax + By \le b\}.\label{eq:MBS}
\end{gather}
We use $\projb P$ to denote the projection of $P$ onto the binary variables $x$.

\begin{definition}[Projected certificates]
 Given a mixed-binary set $P^I$ as in \eqref{eq:MBS} and a point $(\bar{x}, \bar{y}) \in \{0,1\}^n \times \R^d$ such that $\bar{x} \notin \projb P$, a \emph{projected certificate} for $\bar{x}$ is an inequality $\lambda A x + \lambda B y \le \lambda b$ with $\lambda \in \R^m_+$ such that: (i) $\bar{x}$ does not satisfy this inequality; (ii) $\lambda B = 0$. A \emph{minimal} projected certificate is one where the support of the vector $\lambda$ is minimal (i.e. the certificate uses a minimal set of the original inequalities).
\end{definition}

	Standard Fourier-Motzkin theory guarantees us that projected certificates always exist, and furthermore Caratheodory's theorem~\cite{SchrijverIntBook} guarantees that minimal projected certificates use at most $d+1$ inequalities. Together these give the following lemma.


	\begin{lemma} \label{lemma:minimalCert}
		Consider a mixed-binary set $P^I$ as in \eqref{eq:MBS} and a point $(\bar{x}, \bar{y}) \in \{0,1\}^n \times \R^d$ such that $\bar{x} \notin \projb P$. There exists a vector $\lambda \in \R^m_+$ with support of size at most $d+1$ such that $\lambda Ax + \lambda B y \le \lambda b$ is a minimal projected certificate for $\bar{x}$. Moreover, this minimal projected certificate can be obtained in polynomial-time (by solving a suitable LP).
	\end{lemma}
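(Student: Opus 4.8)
The plan is to get existence of the certificate from Farkas' lemma and then obtain the support bound, the minimality, and the polynomial-time computation all at once by passing to a vertex of a suitable polyhedron. First I would restate the hypothesis: $\bar{x} \notin \projb P$ says exactly that the linear system $\{y \in \R^d : By \le b - A\bar{x}\}$ (in the variables $y$) is infeasible. By Farkas' lemma there is then a vector $\lambda \in \R^m_+$ with $\lambda B = 0$ and $\lambda(b - A\bar{x}) < 0$, i.e.\ $\lambda(A\bar{x} - b) > 0$; such a $\lambda$ is precisely a projected certificate for $\bar{x}$ (it violates $\lambda Ax + \lambda By \le \lambda b$ since $\lambda A\bar{x} > \lambda b$, and $\lambda B = 0$). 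Rescaling $\lambda$ we may assume $\lambda(A\bar{x} - b) = 1$. Hence the polyhedron
\[
  Q := \{\lambda \in \R^m_+ : \lambda B = 0,\ \lambda(A\bar{x} - b) = 1\}
\]
is nonempty, and, up to positive scaling, points of $Q$ are exactly the projected certificates of $\bar{x}$; in particular support-inclusion-minimal points of $Q$ correspond to minimal projected certificates.

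Next I would take a vertex $\lambda^\star$ of $Q$. Since $Q \subseteq \R^m_+$ it contains no line, so it is pointed; being nonempty, it therefore has a vertex, which is computable in polynomial time by solving any LP over $Q$ with a method that returns a basic feasible solution (e.g.\ the simplex method, or an ellipsoid/interior-point method followed by purification). Because the only equalities in the description of $Q$ are the $d$ equations $\lambda B = 0$ and the single equation $\lambda(A\bar{x} - b) = 1$, a basic feasible solution has at most $d+1$ positive coordinates, giving the claimed bound $\|\lambda^\star\|_0 \le d+1$. This is exactly the conical Carath\'eodory statement, applied in $\R^{d+1}$ to the vectors $(B_j, (A\bar{x} - b)_j)$ over $j \in [m]$, whose conical hull contains $(0,\ldots,0,1)$ by the Farkas step.

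It then remains to check that $\lambda^\star$ is a \emph{minimal} projected certificate, i.e.\ that its support is inclusion-minimal among projected certificates. Suppose not: then some $\lambda' \in Q$ has $\supp(\lambda') \subsetneq \supp(\lambda^\star)$. Put $e := \lambda^\star - \lambda' \neq 0$. For $j \notin \supp(\lambda^\star)$ we have $\lambda^\star_j = \lambda'_j = 0$, so $e_j = 0$ and those coordinates of $\lambda^\star \pm \varepsilon e$ stay at $0$; for $j \in \supp(\lambda^\star)$ we have $\lambda^\star_j > 0$, so $\lambda^\star_j \pm \varepsilon e_j \ge 0$ for all sufficiently small $\varepsilon > 0$. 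Since the affine equalities defining $Q$ hold at both $\lambda^\star$ and $\lambda'$, they hold along the whole line through them, so $\lambda^\star \pm \varepsilon e \in Q$ for small $\varepsilon > 0$, contradicting that $\lambda^\star$ is a vertex. Hence $\lambda^\star A x + \lambda^\star B y \le \lambda^\star b$ is a minimal projected certificate, and the lemma follows.

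The step I expect to be the main obstacle is the interface between ``vertex'' and ``minimal'': a generic Carath\'eodory or basic-feasible-solution argument hands us a certificate of \emph{small cardinality}, but the lemma asks for an \emph{inclusion-minimal} one, so one must set up the polyhedron $Q$ carefully enough that its vertices are genuinely support-minimal (the perturbation argument above) while at the same time $Q$ is nonempty precisely when $\bar{x} \notin \projb P$ and its vertices automatically carry the $d+1$ bound. The remaining ingredients — the Farkas step and the polynomial-time LP solve — are standard.
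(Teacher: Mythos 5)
Your proposal is correct and follows essentially the same route as the paper's Appendix~\ref{app:minPolytime} proof: both reduce the problem to finding a basic feasible solution (vertex) of a polyhedron of nonnegative aggregation multipliers $\lambda$ with $\lambda B = 0$ plus a normalization, obtain the $d+1$ support bound from the number of equality rows, and derive support-minimality from extremality (your midpoint perturbation argument is the geometric counterpart of the paper's linear-independence-of-basic-columns argument). The only cosmetic difference is the normalization: the paper fixes $e^T\lambda = 1$ and maximizes the violation $\lambda A\bar{x} - \lambda b$, whereas you fix the violation to $1$ so that any vertex of $Q$ already is a certificate, with no optimization needed.
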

For completeness, see Appendix \ref{app:minPolytime} for a proof of Lemma~\ref{lemma:minimalCert}.

Now we can formally define the randomization step $\perturb{\ell}$ (notice that the condition $\lambda B = 0$ guarantees that a projected certificate has the form $ax \le b$).

\begin{algorithm}[H] \caption{$\perturb{\ell}(\bar{x})$}
	\begin{algorithmic}[1]
	\State //Assumes that $\bar{x}$ does not belong to $\projb P$
	\State Let $a x \le b$ be a minimal projected certificate for $\bar{x}$ \label{algo:wStep1}
	\State Sample $\ell$ indices from the support $\supp(a)$ uniformly and independently, let $\bI$ be the set of indices obtained  \label{algo:randCoord}
	\State (Flip coordinates) For all $i \in \bI$, set $\bar{x}_i \leftarrow 1 - \bar{x}_i$ \label{algo:wStep2}
\end{algorithmic}
\end{algorithm}

Note that in the pure-binary case and $\ell=1$, this is reduces to the main step executed during \wSAT. We remark that  the flexibility of introducing the parameter $\ell$ will be needed in Section~\ref{sec:toyFPWalkSAT}.


\subsection{Analyzing the behavior of $\perturb{\ell}$}

	In this section we consider the behavior of the algorithm $\mbw$ that tries to find a feasible mixed-binary solution by just repeatedly applying the randomization step $\perturb{\ell}$.

\begin{algorithm}[H] \caption{\mbw}
	\begin{algorithmic}[1]
	\State \textbf{input parameter:} Integer $\ell \ge 1$
	\State (Starting solution) Consider any mixed-binary point $(\bar{\x}, \bar{\y}) \in \{0,1\}^n \times \R^d$
	\Loop
			\If{$\bar{\x}$ does not belong to $\projb P$}
		    \State $\perturb{\ell}(\bar{\x})$
			\Else
				\State (Output feasible lift of $\bar{\x}$) Find $\bar{\y} \in \R^d$ such that $(\bar{\x}, \bar{\y}) \in P$, return $(\bar{\x},\bar{\y})$
			\EndIf
  	\EndLoop
\end{algorithmic}
\end{algorithm}

	As mentioned in the introduction, we show that this algorithm find a feasible solution if such exists, and the running-time improves with the sparsity and decomposability of the instance. Recall the definition of a decomposable mixed-binary problem from equation \eqref{eq:decomp}, and let $\certsup_i$ denote the maximum support size of a minimal projected certificate for the instance $P^I_i$ which consists only of the $i$th block.

\begin{theorem}[Theorem \ref{thm:decomp} restated] \label{thm:decompR}
Consider a feasible decomposable mixed-binary set as in equation \eqref{eq:decomp}. Then with probability at least $1-\delta$, \mbw with parameter $\ell=1$ returns a feasible solution within $T = \lceil\ln(k/\delta)\rceil\, \sum_i n_i \, 2^{n_i \log \certsup_i}$ iterations.
\end{theorem}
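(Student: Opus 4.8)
The plan is to first reduce to the single-block case and then analyze a single block via a potential-function / hitting-time argument analogous to Schöning's analysis of \wSAT. For the reduction, observe that since $P^I = P^I_1 \times \cdots \times P^I_k$ and a minimal projected certificate for the full instance uses a minimal set of original inequalities, such a certificate must lie entirely within a single block $P^I_i$: any inequality $\lambda A x + \lambda B y \le \lambda b$ with $\lambda B = 0$ that mixes rows from several blocks could be ``split'' into the contributions from each block, and at least one of those block-restricted pieces is still violated by $\bar{\x}$, contradicting minimality. Hence when $\perturb{1}$ is invoked it picks a certificate supported on a single block $i$, flips one coordinate inside block $i$, and leaves all other blocks untouched. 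So the evolution of $\bar{\x}^i$ depends only on block $i$, and \mbw is really running $k$ independent copies of the single-block procedure in an interleaved fashion; the algorithm terminates once every block has reached feasibility.

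For a single block $i$, fix any feasible point $\x^* \in \projb P_i$ and track the Hamming distance $D_t := \|\bar{\x}^i - \x^*\|_0 \in \{0, 1, \ldots, n_i\}$. I want to show that as long as $\bar{\x}^i \notin \projb P_i$, the next flip decreases $D_t$ by $1$ with probability at least $1/c_i$ where $c_i = \min\{s_i(d_i+1), n_i\}$: indeed, the minimal projected certificate $a x \le b$ has $|\supp(a)| \le c_i$ (at most $d_i+1$ inequalities by Carathéodory/Lemma \ref{lemma:minimalCert}, each with at most $s_i$ binary variables, and trivially at most $n_i$), and since $\x^*$ satisfies $a x \le b$ while $\bar{\x}^i$ does not, there must be at least one coordinate $j \in \supp(a)$ with $\bar{\x}^i_j \neq \x^*_j$; flipping that particular coordinate (chosen with probability $\ge 1/|\supp(a)| \ge 1/c_i$) decreases $D_t$. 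Thus in the worst case the process dominates a random walk on $\{0,\ldots,n_i\}$ that, from any non-absorbing state, moves toward $0$ with probability $\ge 1/c_i$; a standard argument (or the crude bound ``probability at least $c_i^{-n_i}$ of taking $n_i$ good steps in a row from any start'') shows that in any window of $n_i$ consecutive invocations on block $i$ the probability of having reached feasibility is at least $c_i^{-n_i} = 2^{-n_i \log c_i}$. Chaining $\lceil \ln(k/\delta)\rceil$ such windows and union-bounding over the $k$ blocks gives that block $i$ is feasible after $\lceil\ln(k/\delta)\rceil\, n_i\, 2^{n_i \log c_i}$ of its own invocations, except with probability $\le \delta/k$; summing the per-block iteration budgets yields the stated bound $T = \lceil\ln(k/\delta)\rceil\sum_i n_i\, 2^{n_i \log c_i}$, and the union bound over blocks gives overall failure probability $\le \delta$. (Here I use $\certsup_i \le c_i$ to match the restated Theorem \ref{thm:decompR}, and the final ``in particular'' bound follows from $c_i \le \bar n$ and $n_i \le \bar n$.)

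The main obstacle I anticipate is the accounting across blocks: because the iteration count $T$ counts \emph{total} invocations of $\perturb{1}$ and these are shared among the $k$ blocks, I need to argue carefully that giving block $i$ a ``budget'' of $n_i\,2^{n_i\log c_i}$ invocations (times the $\ln(k/\delta)$ repetition factor) suffices simultaneously for all blocks — the clean way is to note that \mbw only ever perturbs a block that is still infeasible, so the total number of perturbations is exactly the sum over blocks of the number of perturbations that block receives, and the event ``block $i$ not done within its budget'' is what the per-block analysis bounds by $\delta/k$. A secondary technical point is verifying that the one-step ``good flip'' probability bound genuinely holds from \emph{every} infeasible configuration (not just a typical one) — this is where minimality of the certificate and the support-size bound $\le c_i$ are essential, and where the independence-across-blocks structure is used to ensure the certificate never straddles two blocks.
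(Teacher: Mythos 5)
Your proposal is correct and follows essentially the same route as the paper: minimality forces the projected certificate to lie in a single block (the paper's Lemma \ref{lemma:minCertificate}, including your observation that the certificate's block is still infeasible, which is what keeps finished blocks untouched), a Sch\"oning-style Hamming-distance argument gives the per-block bound (the paper's Theorem \ref{thm:walkSATMI2}), and a union bound over the $k$ blocks with per-block budgets $n_i\,2^{n_i\log \certsup_i}\lceil\ln(k/\delta)\rceil$ finishes, exactly as in the paper. The only nit is the phrase ``chaining $\lceil\ln(k/\delta)\rceil$ such windows'': to drive the per-block failure probability below $\delta/k$ you actually need $\lceil\ln(k/\delta)\rceil\,2^{n_i\log c_i}$ windows of length $n_i$, which is precisely what your stated budget already accounts for.
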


	In light of Lemma \ref{lemma:minimalCert}, if each constraint in $P_i$ has at most $s_i$ integer variables, we have $\certsup_i \le \min\{s_i \cdot (d_i+1), n_i\}$, and thus this statement indeed implies Theorem \ref{thm:decomp} stated in the introduction. We remark that similar guarantees can be obtained for general $\ell$, but we focus on the case $\ell = 1$ to simplify the exposition.

The high-level idea of the proof of Theorem \ref{thm:decompR} is the following:

	\begin{enumerate}
		\item First we show that if we run \mbw over a single block $P^I_i$, then with high probability the algorithm returns a feasible solution within $n_i\ 2^{n_i \log \certsup_i} \cdot \ln(1/\delta)$ iterations. This analysis is inspired by the one given by Sch\"oning~\cite{Schoning99} and argues that with a small, but non-zero, probability the iteration of the algorithm makes the iterate $\bar{\bx}$ closer (in Hamming distance) to a fixed solution $x^*$ for the instance.

		\medskip
		\item Next, we show that when running \mbw over the whole decomposable instance each iteration only depends on \textbf{one} of the blocks $P^I_i$; this uses the minimality of the certificates. So in effect the execution of \mbw can be split up into independent executions over each block, and thus we can put together the analysis from Item 1 for all blocks with a union bound to obtain the result.
	\end{enumerate}

	For the remainder of the section we prove Theorem \ref{thm:decompR}. We start by considering a general mixed-binary set as in equation \eqref{eq:MBS}.	Given such mixed-binary set $P^I$, we use $\certsup = \certsup(P^I)$ to denote the maximum support size of all minimal projected certificates.

	\begin{theorem} \label{thm:walkSATMI2}
		Consider the execution of \mbw over a feasible mixed-binary program as in equation \eqref{eq:MBS}. The probability that \mbw does not find a feasible solution within the first $T$ iterations is at most $(1-p)^{\lfloor T/n \rfloor}$, where $p = \certsup^{-n}$. In particular, for $T = n \cdot 2^{n \log(\certsup)} \cdot \lceil\ln(1/\delta)\rceil$ this probability is at most $\delta$ (this follows from the inequality $(1-x) \le e^{-x}$ valid for $x \ge 0$).
	\end{theorem}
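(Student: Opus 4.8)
The plan is to adapt Sch\"oning's random-walk analysis to the mixed-binary setting, using the projected certificates as the substitute for violated clauses. First I would fix any feasible point $(x^*,y^*) \in P^I$ (one exists by assumption) and track only its binary part $x^* \in \projb P$. The central quantity is the Hamming distance $H_t := \|\bar{\x}^{(t)} - x^*\|_0$ between the iterate at the start of iteration $t$ and the target $x^*$, which always lies in $\{0,1,\ldots,n\}$, and the algorithm terminates as soon as $H_t = 0$ (and possibly earlier, upon hitting some other feasible point).

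The key one-step claim is: whenever $\bar{\x} \notin \projb P$, the minimal projected certificate $a\x \le b$ chosen in Step~2 of $\perturb{1}$ has a coordinate $i \in \supp(a)$ with $\bar{\x}_i \neq x^*_i$. Indeed, since $a = \lambda A$ with $\lambda \ge 0$ and $\lambda B = 0$, the inequality $a\x \le b$ is a nonnegative combination of the constraints defining $P$, hence valid for $\projb P$, so $a x^* \le b < a\bar{\x}$ and therefore $a(\bar{\x} - x^*) = \sum_i a_i(\bar{\x}_i - x^*_i) > 0$; some term must be positive, forcing an $i$ with $a_i \neq 0$ and $\bar{\x}_i \neq x^*_i$. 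Consequently, with probability $1/|\supp(a)| \ge 1/\certsup$ the single index sampled in Step~3 is such a ``good'' coordinate, and flipping it decreases $H_t$ by exactly one; any other choice changes $H_t$ by exactly one as well, so in the worst case it increases by one.

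Next I would group the iterations into $\lfloor T/n\rfloor$ consecutive blocks of length $n$ and show that, conditioned on the algorithm not having returned before a given block and on the whole history up to the block's start, it returns a feasible solution within that block with probability at least $p = \certsup^{-n}$. Letting $D \le n$ be the (well-defined, since the current iterate is infeasible) Hamming distance to $x^*$ at the block's start, consider the event that in each of the first $D$ iterations of the block the sampled index is a good coordinate; by the one-step claim and a chain of conditional probabilities this event has probability at least $(1/\certsup)^D \ge (1/\certsup)^n = p$, and on it $H_t$ decreases monotonically to $0$ within $D \le n$ steps, so a feasible solution is returned inside the block. Since the per-step randomness is fresh, multiplying failure probabilities over the $\lfloor T/n\rfloor$ blocks yields failure probability at most $(1-p)^{\lfloor T/n\rfloor}$. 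For the ``in particular'' part, note $2^{n\log\certsup} = \certsup^{n} = 1/p$, so with $T = n\cdot(1/p)\cdot\lceil\ln(1/\delta)\rceil$ and $1/p$ a positive integer we get $\lfloor T/n\rfloor = (1/p)\lceil\ln(1/\delta)\rceil \ge (1/p)\ln(1/\delta)$, hence $(1-p)^{\lfloor T/n\rfloor} \le e^{-p\lfloor T/n\rfloor} \le e^{-\ln(1/\delta)} = \delta$. The one delicate point is making the block argument rigorous: the ``always pick a good coordinate'' event must be well defined even though the algorithm may halt before step $D$, which is handled by coupling with the process that keeps flipping past feasibility — the two agree up to the first feasible iterate, which is all that is used.
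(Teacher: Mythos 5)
Your proposal is correct and follows essentially the same route as the paper's proof: fix a feasible $x^*$, show each certificate's support contains a coordinate where the iterate and $x^*$ disagree so a single flip decreases the Hamming distance with probability at least $1/\certsup$, and then chain blocks of $n$ iterations each succeeding with probability at least $p=\certsup^{-n}$. Your explicit verification that $x^*$ satisfies the certificate (via $\lambda\ge 0$, $\lambda B=0$) and your remark on coupling past early termination only make explicit points the paper leaves implicit.
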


	\begin{proof}
		Consider a fixed solution $x^* \in \projb P$. To analyze \mbw, we only keep track of the Hamming distance of the (random) iterate $\bar{\x}$ to $x^*$; let $\bX_t$ denote this (random) distance at iteration $t$, for $t \ge 1$. If at some point this distance vanishes, i.e. $\bX_t = 0$, we know that $\bar{\x} = x^*$ and thus $\bar{\x} \in \projb P$; at this point the algorithm returns a feasible solution for $P^I$.

	Fix an iteration $t$. To understand the probability that $\bX_t = 0$, suppose that in this iteration $\bar{\x}$ does not belong to $\projb P$, and let $a x \le b$ be the minimal projected certificate for it used in $\perturb{1}$. Since the feasible point $x^*$ satisfies the inequality $a x \le b$ but $\bar{\x}$ does not, there must be at least one index $\bi^*$ in the support of $a$ such where $x^*$ and $\bar{\x}$ differ. Then if algorithm \mbw makes a ``lucky move'' and chooses $\bI = \{\bi^*\}$ in Line \ref{algo:randCoord}, the modified solution after flipping this coordinate (the next line of the algorithm) is one unit closer to $x^*$ in Hamming distance, hence $\bX_{t+1} = \bX_t - 1$. Moreover, since $\bI$ is independent of $\bi$, the probability of choosing $\bI = \{\bi^*\}$ is $1/|\supp(a)| \ge 1/\certsup$.

		Therefore, if we start at iteration $t$ and for all the next $\bX_t$ iterations either the iterate belongs to $\projb P$ or the algorithm makes a ``lucky move'', it terminates by time $t + \bX_t$. Thus, with probability at least $(1/\certsup)^{\bX_t} \ge (1/\certsup)^n = p$ the algorithm terminates by time $t + \bX_t \le t + n$.

		To conclude the proof, let $\alpha = \lfloor T/n \rfloor$ and call iterations $i \cdot n$, \ldots, $(i+1) \cdot n -1$ the $i$-th block of iterations. If the algorithm has not terminated by iteration $i \cdot n - 1$, then with probability at least $p$ it terminates within the next $n$ iterations, and hence within the $i$-th block. Putting these bounds together for all $\alpha$ blocks, the probability that the algorithm \emph{does not} stop by the end of block $\alpha$ is at most $(1-p)^\alpha$. This concludes the proof.
	\end{proof}


	Going back to decomposable problems, we now make formal the claim that minimal projected certificates for decomposable mixed-binary sets do not mix the constraints from different blocks. Notice that projected certificates for a decomposable mixed-binary set as in equation \eqref{eq:decomp} have the form $\sum_i \lambda^i A^i x^i \le \sum_i \lambda^i b^i$ and $\lambda^i B^i = 0$ for all $i \in [k]$.

	\begin{lemma} \label{lemma:minCertificate}
		Consider a decomposable mixed-integer set as in equation \eqref{eq:decomp}. Consider a point $\bar{x} \notin \projb P$ and let $\sum_i \lambda^i A^i x^i \le \sum_i \lambda^i b^i$ be a minimal projected certificate for $\bar{x}$. Then this certificate uses only inequalities from one block $P^j$, i.e. there is $j$ such that $\lambda^i = 0$ for all $i \neq j$. Moreover, $\bar{x}^j \notin \projb P_j$.
	\end{lemma}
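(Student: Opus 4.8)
The plan is to argue that if a minimal projected certificate used inequalities from two or more blocks, we could strictly shrink its support, contradicting minimality. First I would unpack what it means for $\sum_i \lambda^i A^i x^i \le \sum_i \lambda^i b^i$ to be violated by $\bar{x} = (\bar{x}^1, \ldots, \bar{x}^k)$: it means $\sum_i (\lambda^i A^i \bar{x}^i - \lambda^i b^i) > 0$. Since this is a sum over blocks of the quantities $\beta_i := \lambda^i A^i \bar{x}^i - \lambda^i b^i$, at least one term must be strictly positive, say $\beta_j > 0$ for some $j$. I claim $\lambda^j$ alone (with all other $\lambda^i$ set to zero) is still a projected certificate. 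Condition (ii), $\lambda^j B^j = 0$, is inherited directly since the original certificate satisfied $\lambda^i B^i = 0$ for every block separately (this is the key structural fact about decomposable sets noted just before the lemma: the projected-certificate condition $\lambda B = 0$ splits blockwise because $B$ is block-diagonal). Condition (i) holds because $\lambda^j A^j \bar{x}^j - \lambda^j b^j = \beta_j > 0$, i.e. $\bar{x}^j$ violates $\lambda^j A^j x^j \le \lambda^j b^j$.

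Next I would invoke minimality: the support of the original $\lambda$ is $\bigcup_i \supp(\lambda^i)$ (a disjoint union across blocks, since the blocks act on disjoint coordinate sets), and the support of the new certificate is just $\supp(\lambda^j)$. If any $\lambda^i \neq 0$ for $i \neq j$, then $\supp(\lambda^j) \subsetneq \bigcup_i \supp(\lambda^i)$, strictly contradicting minimality of the original certificate. Hence $\lambda^i = 0$ for all $i \neq j$, which is the first assertion. The "moreover" clause then follows: the certificate is now entirely $\lambda^j A^j x^j \le \lambda^j b^j$ with $\lambda^j B^j = 0$, so this is a projected certificate for $\bar{x}^j$ with respect to the single block $P^I_j$; in particular $\bar{x}^j \notin \projb P_j$ (if it were in $\projb P_j$ it would satisfy every projected inequality of block $j$, contradicting (i)).

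I do not expect a serious obstacle here; the argument is essentially bookkeeping about block-diagonal structure. The one point that needs a sentence of care is the claim that $\lambda^i B^i = 0$ holds for \emph{each} block individually rather than merely $\sum_i \lambda^i B^i = 0$ — but this is exactly because $B = \mathrm{diag}(B^1, \ldots, B^k)$ acts on disjoint continuous-variable blocks $y^i \in \R^{d_i}$, so $\lambda B = (\lambda^1 B^1, \ldots, \lambda^k B^k)$ and requiring this to be the zero vector forces each $\lambda^i B^i = 0$. A secondary subtlety is making sure $j$ is chosen consistently when the "moreover" is stated for "block $P^j$": since after the minimality argument only $\lambda^j$ survives, $j$ is well-defined as the unique block with $\lambda^j \neq 0$.
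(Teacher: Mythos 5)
Your proposal is correct and follows essentially the same route as the paper: pick by linearity a block $j$ whose partial sum is violated, observe that the block-diagonal structure of $B$ gives $\lambda^j B^j = 0$ so that $\lambda^j$ alone is already a projected certificate, contradict minimality to force $\lambda^i = 0$ for $i \neq j$, and then read off $\bar{x}^j \notin \projb P_j$ from the surviving single-block certificate. Your extra care about $\lambda B$ splitting as $(\lambda^1 B^1, \ldots, \lambda^k B^k)$ is exactly the remark the paper makes just before the lemma, so nothing is missing.
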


	\begin{proof}
		Let $\bar{x} = (\bar{x}^1, \bar{x}^2, \ldots, \bar{x}^k)$ and call the certificate $(ax \le b) \triangleq (\sum_i \lambda^i A^i x^i \le \sum_i \lambda^i b^i)$. By definition of projected certificate we have $\sum_i \lambda^i A^i \bar{x}^i > \sum_i \lambda^i b^i$, and thus by linearity there must be an index $j$ such that $\lambda^{j} A^{j} \bar{x}^{j} > \lambda^{j} b^{j}$. Moreover, as remarked earlier, decomposability implies that the certificate satisfies $\lambda^i B^i = 0$ for all $i$, so in particular for $j$. Thus, the inequality $\lambda^{j} (A^{j}, B^{j}) (x^{j}, y^{j}) \le \lambda^j b^{j}$ obtained by combining only the inequalities form $P_{j}$ is a projected certificate for $\bar{x}$. The minimality of the original certificate $ax \le b$ implies that $\lambda^i = 0$ for all $i \neq j$. This concludes the first part of the proof.

		Moreover, since $\lambda^{j} A^{j} \bar{x}^{j} > \lambda^{j} b^{j}$ and $\lambda^j B^j = 0$ we have that $\lambda^j (A^j, B^j) (\bar{x}^j, y) > \lambda^j b^j$ for all $y$, and hence $\bar{x}^j$ does not belong to $\projb P_j$. This concludes the proof.
		 \mqed
	\end{proof}

	We can finally prove the desired theorem.

	\begin{proof}[Proof of Theorem \ref{thm:decompR}.] We use the natural decomposition $\bar{\x} = (\bar{\x}^1, \ldots, \bar{\x}^k) \in \{0,1\}^{n_1} \times \ldots \times \{0,1\}^{n_k}$ of the iterates of the algorithm.	From Lemma \ref{lemma:minCertificate}, we have that for each scenario, each iteration of \mbw is associated with just one of the blocks $P^I_j$'s, namely the $P^I_j$ containing all the inequalities in the minimal projected certificate used in this iteration; let $\J_t \in [k]$ denote the (random) index $j$ of the block associated to iteration $t$. Notice that at iteration $t$, only the binary variables $x^{\J_t}$ can be modified by the algorithm.

	Let $T_i = n_i\, 2^{n_i \log n_i}\lceil \ln(k/\delta) \rceil$. Applying the proof of Theorem \ref{thm:walkSATMI2} to the iterations $\{t : \J_t = i\}$ with index $i$, we get that with probability at least $1-\frac{\delta}{k}$ the algorithm finds some $\bar{\x}^i$ in $\projb P_i$ within the first $T_i$ of these iterations. Moreover, after the algorithm finds such a point, it does not change it (that is, the remaining iterations have index $\J_t \neq i$, due to the second part of Lemma \ref{lemma:minCertificate}).

	Therefore, by taking a union bound we get that with probability at least $1 - \delta$, \emph{for all} $i \in [k]$ the algorithm finds $\bar{\x}^i \in \projb P_i$ within the first $T_i$ iterations with index $i$ (for a total of $\sum_i T_i = T$ iterations). When this happens, the total solution $\bar{\x}$ belongs to $\projb P$ and the algorithm returns. This concludes the proof.  \mqed
	\end{proof}


\section{Randomization step $\perturb{\ell}$ within Feasibility Pump}\label{sec:toyFPWalkSAT}

	In this section we incorporate the randomization step $\perturb{\ell}$ into the Na\"ive Feasibility Pump, the resulting algorithm being called \WFP. We describe this algorithm in a slightly different way and using a notation more convenient for the analysis.

Consider a mixed-binary set $P^I$ as in equation \eqref{eq:MBS}. Given a 0/1 point $\wt{x} \in \{0,1\}^n$, let $\lproj(P, \wt{x})$ denote a point $(x,y)$ in $P$ where $\|\wt{x} - x\|_1$ is as small as possible. 	Also, for a vector $v \in [0,1]^p$, we use $\round(v)$ to denote the vector obtained by rounding each component of $v$ to the closest integer; we use the convention that $\frac{1}{2}$ is rounded to 1, but any consistent rounding would suffice. Notice that operations `$\lproj$' and `$\round$' correspond precisely to Steps \ref{alg:naiveProj} and \ref{alg:naiveRound} in the Na\"ive Feasibility Pump. With this notation, algorithm \WFP can be described as follows.

	\begin{algorithm}[h] \caption{\WFP} \label{alg:WFP}
		\begin{algorithmic}[1]
    	\State \textbf{input parameter:} integer $\ell \ge 1$
    	\smallskip
    	\State Let $(\bar{x}^0, \bar{y}^0)$  be an optimal solution of the LP relaxation
    	\State Let $\wt{x}^0 = \round(\bar{x}^0)$
    	\For{t = 1,2,\ldots}
    		\State $(\bar{\x}^t, \bar{\y}^t) = \lproj(P, \tx^{t-1})$ \label{alg:lproj}
    		\State $\tx^t = \round(\bar{\x}^t)$ \label{alg:round}

				\smallskip
				\If{$(\tx^t, \bar{\y}^t) \in P$} \Comment{equivalently, $\tx^t \in \projb(P)$}
					\State Return $(\tx^t, \bar{\y}^t)$ \label{algo:WFPRet}
				\EndIf

				\smallskip
				\If{$\tx^t = \tx^{t-1}$} \Comment{iterations have stalled}
					\State $\tx^t = \perturb{\ell}(\tx^t)$
				\EndIf
	  	\EndFor
    \end{algorithmic}
  \end{algorithm}

Note that stalling in the above algorithm is determined using the condition $\tx^t = \tx^{t-1}$. What about `long cycle' stalling, that is $\tx^t = \tx^{t'}$ where $t' < t -1$, but $\tx^{t'}, \dots, \tx^{t -1}$ are all distinct binary vectors. As it turns out (assuming no numerical errors) a consistent rounding rule implies that stalling will always occur with cycles of length two. 

\begin{theorem}\label{thm:sidenote}
With consistent rounding, long cycles cannot occur. 
\end{theorem}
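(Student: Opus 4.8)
The plan is to show that the map $\wt x \mapsto \round(\lproj(P,\wt x))$, when applied to binary vectors arising during \WFP, is "stalling-idempotent" in the sense that if $\wt x^{t} \neq \wt x^{t-1}$ then $\wt x^{t+1} = \wt x^{t}$; this immediately rules out cycles of length $\ge 3$, since once two consecutive iterates differ, the following one repeats the latter and a stall (length-two cycle) is detected. To this end I would first fix a binary vector $\wt x$ that is not in $\projb(P)$, let $(\bar x, \bar y) = \lproj(P,\wt x)$, and analyze the fractional parts of $\bar x$. The key observation is that, because $(\bar x, \bar y)$ minimizes $\|\wt x - x\|_1$ over $P$, coordinatewise the value $\bar x_i$ is "pulled toward" $\wt x_i$ as much as the polytope allows; in particular, I claim that if $\bar x_i < \tfrac12$ then $\wt x_i = 0$, and if $\bar x_i > \tfrac12$ then $\wt x_i = 1$ (and the boundary case $\bar x_i = \tfrac12$ is handled by the consistent rounding convention, rounding to $1$, which is consistent with $\wt x_i$ being whatever the rounding rule dictates). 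Equivalently, $\round(\bar x)_i = \wt x_i$ whenever $\bar x_i \notin \{0,1\}$, and more to the point $\round(\bar x)$ agrees with $\wt x$ on every coordinate where $\bar x$ is fractional.

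The heart of the argument is therefore the following separation-style claim: for each coordinate $i$, if $\round(\bar x_i) \neq \wt x_i$ then $\bar x_i \in \{0,1\}$, i.e. $\bar x_i$ is already integral. I would prove this by a local exchange/perturbation argument on the LP: suppose $\bar x_i$ is fractional and $\round(\bar x_i) \neq \wt x_i$; without loss of generality $\wt x_i = 0$ and $\bar x_i > \tfrac12$. Consider moving $\bar x_i$ slightly toward $0$; I would argue that this can be done while staying in $P$ (using that $\bar x_i$ is in the open interval $(0,1)$, so the box constraints are slack at $i$, and adjusting the other coordinates / the $y$ part appropriately along a feasible direction), strictly decreasing $\|\wt x - \cdot\|_1$ — contradicting optimality of $\lproj$. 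The subtlety is that moving $\bar x_i$ down may force other binary coordinates $\bar x_j$ to move, possibly increasing their contribution to the $\ell_1$ distance; here one uses the structure of $\lproj$ as a linear program and picks the feasible direction of steepest descent of the $\ell_1$ objective, or invokes that $\lproj$ returns a vertex so that one can reason about the active constraints. A cleaner route, and the one I would actually try first, is to observe that the $\ell_1$-projection objective decomposes per coordinate into $\sum_i |x_i - \wt x_i|$, each term convex; then $\round(\bar x)$ is obtained from $\bar x$ by rounding, and the claim "$\round(\bar x)_i \neq \wt x_i \implies \bar x_i \in \{0,1\}$" should follow from the fact that $\bar x$ is a minimizer of a coordinatewise-separable convex function over $P$ — if some fractional coordinate were on the "wrong side" of $\tfrac12$ relative to $\wt x_i$, the subgradient of the objective in that coordinate would point consistently toward $\wt x_i$ and a feasible move would improve the objective.

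Once the coordinatewise claim is in hand, the rest is bookkeeping. At iteration $t$ with $\wt x^{t} \neq \wt x^{t-1}$: since $\wt x^{t} \notin \projb(P)$ (otherwise the algorithm would have returned), the next projection produces $(\bar x^{t+1}, \bar y^{t+1}) = \lproj(P, \wt x^{t})$, and by the claim $\round(\bar x^{t+1})$ agrees with $\wt x^{t}$ on every coordinate where $\bar x^{t+1}$ is fractional. On coordinates where $\bar x^{t+1}$ is already integral, rounding leaves it unchanged, and I would argue these coordinates already matched $\wt x^t$ as well — indeed $\wt x^t$ was itself obtained as $\round(\bar x^t)$, and the projection of $\wt x^t$ keeps any coordinate that is "easy" (integral and equal to $\wt x^t_i$) fixed, since flipping it would only increase the $\ell_1$ distance; this requires a short monotonicity argument that a coordinate which $\lproj$ can satisfy exactly at the target value $\wt x^t_i$ will be set to that value. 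Combining, $\wt x^{t+1} = \round(\bar x^{t+1}) = \wt x^{t}$, so the stalling test $\wt x^{t+1} = \wt x^{t}$ fires and no cycle of length three or more ever closes.

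\medskip
\noindent\textbf{Main obstacle.} I expect the main difficulty to be the local-exchange argument establishing "$\round(\bar x)_i \neq \wt x_i \implies \bar x_i \in \{0,1\}$", specifically controlling the coupled movement of other coordinates under the box and linear constraints of $P$ when one tries to improve a fractional coordinate; getting this fully rigorous (rather than the heuristic "subgradient points the right way") will likely require either a careful vertex/active-set analysis of $\lproj$ or a clean convexity argument, and the consistent-rounding convention has to be threaded carefully through the $\bar x_i = \tfrac12$ boundary case so that "agrees on fractional coordinates" upgrades to "agrees everywhere relevant". A secondary, more mundane issue is ensuring the stalling detection in \WFP is stated over the right pair of iterates (it compares $\tx^t$ and $\tx^{t-1}$), so the conclusion "no long cycles" must be phrased as: the first time two consecutive binary iterates coincide, the algorithm declares a stall, and the above shows this necessarily happens within one step of the first time two consecutive iterates differ — hence any putative long cycle is interrupted before it can close.
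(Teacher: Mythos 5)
Your plan hinges on the claim that $\round(\lproj(P,\wt x))$ agrees with $\wt x$ on every coordinate where the projection is fractional, hence that the iterate can change at most once before the stall test fires (i.e.\ that $\altProj$ is idempotent for \emph{every} mixed-binary LP). Both the coordinatewise claim and the global conclusion are false. For the coordinatewise claim, take $P=\{x\in[0,1]^2: x_1+0.1\,x_2\ge 0.7\}$ and $\wt x=(0,0)$: the $\ell_1$-projection is the vertex $(0.7,0)$, whose first coordinate is fractional yet rounds to $1\neq \wt x_1$. The subgradient intuition does not save this: the $\ell_1$ projection moves whichever coordinates repair the violated constraints at the cheapest rate per unit of $\ell_1$ distance, and such a coordinate may have to travel well past $\tfrac12$. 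Consequently the stronger statement you aim for --- ``once two consecutive binary iterates differ, the next one repeats the latter'' --- would mean Feasibility Pump (without perturbation) can never produce three distinct consecutive roundings, which contradicts its ordinary behavior; indeed the paper proves idempotence of $\altProj$ only for subset-sum instances (Lemma \ref{lemma:stabAltProj}), via a delicate case analysis, precisely because it fails in general. The theorem does not assert that the trajectory stalls immediately, only that it cannot \emph{close} a cycle of length $\ge 3$.

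The paper's actual argument is a potential-function argument that you should compare against. Lemma \ref{prop:convergent} shows $\|\bar x^{t}-\wt x^{t}\|_1$ is non-increasing along the (unperturbed) iterations, since each projection and each rounding is an $\ell_1$-nearest-point choice. On a putative long cycle this quantity must therefore be constant, so in particular $\|\bar x^{i+1}-\wt x^{i}\|_1=\|\bar x^{i+1}-\wt x^{i+1}\|_1$. After reflecting coordinates so that $\wt x^{i}=\mathbf 1$, and letting $J\neq\emptyset$ be the set of coordinates where $\wt x^{i+1}$ differs from $\wt x^{i}$, this equality forces $\sum_{j\in J}\bar x^{i+1}_j=|J|/2$; but consistent rounding ($\tfrac12\mapsto 1$) together with $\wt x^{i+1}_j=\round(\bar x^{i+1}_j)=0$ on $J$ forces $\bar x^{i+1}_j<\tfrac12$ strictly for every $j\in J$, a contradiction. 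If you want to salvage your write-up, this monotone-distance-plus-parity route is the one to take; the local-exchange/idempotence route cannot be repaired in the general setting of the theorem.
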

We present a proof of \ref{thm:sidenote} in Appendix \ref{sec:side}.
For the remainder of the section, we analyze the behavior of algorithm \WFP on separable subset-sum instances, proving Theorem \ref{thm:WFP} stated in the introduction.

	\subsection{Running time of \WFP for separable subset-sum instances: Proof of Theorem \ref{thm:WFP}}

	Notice that the projection operators `$\lproj$' and `$\round$' now present also act on each block independently, namely  given a point $x = (x^1, \ldots, x^k) \in \R^{n_1} \times \ldots \times \R^{n_k}$, if $(\check{x}^1, \ldots, \check{x}^k) = \lproj(P, x)$ then $\check{x}^i = \lproj(P_i, x^i)$ for all $i \in [k]$, and similarly for `$\round$'.
	Therefore, as in the proof of Theorem \ref{thm:decompR}, it suffices to analyze the execution of algorithm \WFP over a single block/inequality of the separable subset-sum problem. More precisely, it suffices to prove the following guarantee for $\WFP$ on a general subset-sum instance.

  \begin{theorem} \label{thm:WFPGen}
		Consider a feasible subset-sum problem $P \subseteq \R^n$. Then for every $T \ge 1$, the probability that \WFP with $\ell = 2$ does not find a feasible solution within the first $2T$ iterations is at most $(1-p)^{\lfloor T/n \rfloor}$, where $p = (1/n^2)^n$. In particular, for $T = n \cdot 2^{2n \log n} \cdot \lceil\ln(1/\delta)\rceil$ this probability is at most $\delta$.
	\end{theorem}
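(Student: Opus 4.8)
The plan is to mirror the structure of the proof of Theorem~\ref{thm:walkSATMI2}: track the Hamming distance of the current rounded point $\tx^t$ to a fixed feasible solution $x^* \in P$, show that each ``round of the algorithm'' (one projection--rounding step followed possibly by a $\perturb{2}$ call) has a nonzero probability of decreasing this distance by one, and conclude via the usual block argument. The key new ingredient, relative to Theorem~\ref{thm:walkSATMI2}, is that here the iterate is not moved directly by $\perturb{\ell}$ alone: between randomization steps the algorithm applies $\lproj$ and $\round$, and one must argue these do not undo progress. This is exactly the content of the two lemmas the introduction promises: Lemma~\ref{lemma:stabAltProj} (projection$+$rounding is idempotent on subset-sum instances, so once $\tx^t = \tx^{t-1}$ the pair $(\lproj,\round)$ leaves it fixed and genuine stalling is detected) and Lemma~\ref{lemma:2choice} (a randomization step plus the subsequent projection$+$rounding has probability at least $1/n^2$ of producing an iterate strictly closer in Hamming distance to $x^*$). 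I would state and use these two lemmas as black boxes here.

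First I would set up the potential: fix $x^* \in P \cap \{0,1\}^n$ and let $\bX_t$ be the Hamming distance between $\tx^t$ and $x^*$; if $\bX_t = 0$ then $\tx^t = x^*$ is feasible and \WFP returns at Line~\ref{algo:WFPRet}. Next I would argue that stalling happens regularly: by the idempotence lemma (Lemma~\ref{lemma:stabAltProj}), after the projection$+$rounding step the resulting $\tx^t$ is a fixed point of $(\lproj,\round)$, so either it is feasible (done) or at the very next iteration $\tx^{t+1} = \tx^t$ and the stalling condition triggers, invoking $\perturb{2}$. Thus within every two consecutive iterations the algorithm gets to apply a randomization step. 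Then I would invoke Lemma~\ref{lemma:2choice}: conditioned on the current state, with probability at least $1/n^2$ the combined effect of $\perturb{2}$ and the following $\lproj,\round$ is to move the iterate one unit closer to $x^*$, i.e.\ $\bX$ decreases by one. Here the choice $\ell = 2$ (rather than $\ell=1$) enters: two coordinates are resampled, and the factor $1/n^2 = (1/n)^2$ is the probability of the ``lucky'' pair of flips --- one presumably to correct a discrepancy with $x^*$ on the violated constraint and one to compensate for the distortion that $\lproj$ would otherwise introduce when projecting back (this is where I expect the real work to be, and why the earlier Remark's single-flip step fails).

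Finally I would package this into the tail bound exactly as in Theorem~\ref{thm:walkSATMI2}. Starting from any iteration, if over the next $\bX_t \le n$ ``rounds'' the algorithm always makes the lucky move guaranteed by Lemma~\ref{lemma:2choice} (or terminates earlier), then $\bX$ reaches $0$ and the algorithm returns; since each round spans at most two iterations, this happens within $2n$ iterations with probability at least $(1/n^2)^n = p$. Partitioning the first $2T$ iterations into $\lfloor T/n \rfloor$ blocks of $2n$ iterations each and using that the events ``terminate within block $i$'' have conditional probability at least $p$, the probability of not terminating within $2T$ iterations is at most $(1-p)^{\lfloor T/n\rfloor}$. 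The stated choice $T = n \cdot 2^{2n\log n}\cdot \lceil \ln(1/\delta)\rceil$ makes $\lfloor T/n \rfloor \ge 2^{2n\log n}\ln(1/\delta) = p^{-1}\ln(1/\delta)$, so $(1-p)^{\lfloor T/n\rfloor} \le e^{-p \cdot p^{-1}\ln(1/\delta)} = \delta$ by the inequality $1-x \le e^{-x}$, completing the proof. The main obstacle is establishing Lemma~\ref{lemma:2choice} --- understanding precisely how $\lproj$ on a subset-sum constraint transforms a flipped integer point and showing a two-coordinate flip can always be chosen to net a Hamming-distance decrease after re-projection and re-rounding; the idempotence Lemma~\ref{lemma:stabAltProj} and the block/union-bound bookkeeping are comparatively routine.
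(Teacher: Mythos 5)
Your proposal is correct and matches the paper's proof in all essentials: fix a feasible $x^*$, use Lemma~\ref{lemma:stabAltProj} to conclude that after any projection--rounding step the iterate is a fixed point, so a $\perturb{2}$ call is triggered within every two iterations, invoke Lemma~\ref{lemma:2choice} (via Corollary~\ref{cor:2choice}) for the $1/n^2$ per-round progress probability, and finish with the block/chain-rule argument of Theorem~\ref{thm:walkSATMI2} together with the $(1-x)\le e^{-x}$ calculation. The only difference is presentational: the paper formalizes your ``each randomization round costs at most two \WFP iterations'' bookkeeping through a compressed variant of the algorithm (Algorithm~\ref{alg:WFPComp}) and an explicit coupling statement (Lemma~\ref{lemma:coupling}), whereas you argue it inline.
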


 	The high-level idea of the proof of this theorem is the following. We use a similar strategy as before, where we consider a fixed feasible solution $x^*$ and track its distance to the iterates $\wt{\bx}^t$ generated by algorithm \WFP. However, while again the randomization step $\perturb{2}$ brings $\wt{\bx}^t$ closer to $x^*$ with small but non-zero probability, the issue is that the projections `$\lproj$' and `$\round$' in the next iterations could send the iterate even further from $x^*$. To analyze the algorithm we then use the structure of subset-sum instances to: 1) First control the combination `$\lproj + \round$' in Steps \ref{alg:lproj} and \ref{alg:round}, showing that in this case they are \emph{idempotent}, namely applying them once or repeatedly yields the same effect (Lemma \ref{lemma:stabAltProj}); 2) Strengthen the analysis of Theorem \ref{thm:decompR} to show that a round of $\perturb{2}$ \emph{plus} `$\lproj + \round$' still has a non-zero probability of generating a point closer to $x^*$ (Lemma \ref{lemma:2choice}). For this, it will be actually important that we use $\ell = 2$ in algorithm \WFP (actually $\ell \ge 2$ suffices).

	For the remainder of the section we prove Theorem \ref{thm:WFPGen}. To simplify the notation we omit the polytope $P$ from the notation of $\lproj$. We assume that our subset-sum problem $P = \{x \in [0,1]^n : ax = b\}$ is such that \emph{all} coordinates of $a$ are positive, since components with $a_i = 0$ do not affect the problem (more precisely, after the first iteration of the algorithm, the value of $\wt{\bx}^t_i$ is set to 0 or 1 and does not change anymore, and this value does not affect the feasibility of the solutions $\wt{\bx}^t$'s). Also remember that subset-sum problems only have binary variables.

	Given a point $\wt{x} \in \{0,1\}^n$, let $\altProj(\wt{x}) \in \{0,1\}^n$ be the effect of applying to $\wt{x}$ $\lproj(.)$ and then $\round(.)$. Notice that if $\wt{x}$ belongs to $P$, then $\altProj(\wt{x}) = \wt{x}$. Then algorithm \WFP can be thought as performing a $\altProj$ operation, then checking if the iterate obtained either belongs to $P$ (in which case it exits) of if it equals the previous iterate (in which case it applies $\perturb{2}$); if neither of these occur, then another $\altProj$ operation is performed. So an important component for analyzing this algorithm is getting a good control over a sequence of $\altProj$ operations. For that, define the iterated operation  $\altProj^t(\wt{x}) = \altProj\left(\altProj^{t-1}(\wt{x})\right)$ (with $\altProj^1 = \altProj$) and if the sequence $(\altProj^t(\wt{x}))$ stabilizes at a point, let $\altProj^*(\wt{x})$ denote this point.



	A crucial observation, given by the next lemma, is that for subset-sum instances the operation of $\altProj$ is idempotent, namely it stabilizes after just one operation.

	\begin{lemma} \label{lemma:stabAltProj}
		Let $P$ be a subset-sum instance. Then for every $\wt{x} \in \{0,1\}^n$, $\altProj^*_P(\wt{x}) = \altProj_P(\wt{x})$.
	\end{lemma}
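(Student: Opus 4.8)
## Proof Plan for Lemma~\ref{lemma:stabAltProj}

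The plan is to show that after one application of $\altProj$, the resulting point is a fixed point of $\altProj$, which immediately gives $\altProj^*_P(\wt{x}) = \altProj_P(\wt{x})$. Write $\hat{x} = \lproj(\wt{x})$ for the fractional projection and $x' = \round(\hat{x}) = \altProj(\wt{x})$. Since $P = \{x \in [0,1]^n : ax = b\}$ with all $a_i > 0$, I first want to understand the structure of the $\ell_1$-projection of a 0/1 point onto this single hyperplane intersected with the box. The key structural fact is that $\lproj$ changes $\wt{x}$ in a very controlled way: if $a\wt{x} > b$ we must decrease the $x$-values (moving some coordinates down from $1$, or down from wherever they are), and if $a\wt{x} < b$ we must increase them; moreover an optimal $\ell_1$-projection onto a hyperplane-plus-box can be taken to move coordinates ``greedily'' so that at most the coordinates needed to absorb the discrepancy $|a\wt{x} - b|$ are moved, and all but possibly one of the moved coordinates go all the way to a box boundary ($0$ or $1$). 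So in $\hat{x}$ there is at most one fractional coordinate, say index $j$.

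Next I would analyze $x' = \round(\hat{x})$ and then recompute $\lproj(x')$. The two cases are symmetric, so consider $a\wt{x} > b$ (the iterate overshoots). Then $\lproj$ pushes mass down, and $\hat{x}$ satisfies $a\hat{x} = b$ with $\hat{x} \le \wt{x}$ coordinatewise. When we round $\hat{x}$ to get $x'$, the integral coordinates of $\hat{x}$ are unchanged and the single fractional coordinate $j$ is rounded to the nearest integer. I claim $x'$ is ``close'' to satisfying $ax' = b$: specifically $|ax' - b| = |a_j(x'_j - \hat{x}_j)| \le a_j$, and in fact rounding $\hat{x}_j$ changes the sum by less than $a_j$ in absolute value. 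The crucial point is to then argue that when we apply $\lproj$ to $x'$, the projection can be realized by moving \emph{only} coordinate $j$ back to exactly $\hat{x}_j$ — because the discrepancy $ax' - b$ has magnitude at most $a_j$, coordinate $j$ alone can absorb it while staying in $[0,1]$ (here I need that $\hat{x}_j \in [0,1]$, which holds, and that no cheaper projection exists, which follows since moving coordinate $j$ by $\delta$ costs $|\delta|$ and fixes a discrepancy of $a_j|\delta| \ge |\delta|$ as $a_j$ — wait, need $a_j \ge 1$? No: the point is that $\hat x$ itself is feasible and within $\ell_1$-distance $|x'_j - \hat x_j| < 1$ of $x'$, and one can show no feasible point is closer). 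Hence $\lproj(x') $ can be taken equal to $\hat{x}$, so $\round(\lproj(x')) = \round(\hat{x}) = x'$, i.e. $\altProj(x') = x'$.

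I expect the main obstacle to be handling the $\ell_1$-projection carefully: $\lproj$ returns \emph{some} minimizer, and $\ell_1$-projections onto a hyperplane are generally non-unique, so I must be careful that the structural claims (``at most one fractional coordinate'', ``the projection of $x'$ can be taken to equal $\hat{x}$'') are about the existence of an optimal solution with these properties rather than about all of them — and the paper's parenthetical remark before Theorem~\ref{thm:WFP} (assuming $\lproj$ returns a vertex when $\wt x \notin P$) is exactly the hook I would use to pin down the relevant minimizer. The cleanest route is: (i) characterize the set of $\ell_1$-minimizers of projecting a point onto $\{ax = b\} \cap [0,1]^n$ as those obtained by a greedy fill/drain process, observing such a minimizer has at most one non-integral coordinate when the starting point is integral; (ii) show $\round$ of such a minimizer differs from it in $\ell_1$ by less than $1$ and in the value $a(\cdot)$ by less than $a_j$; (iii) conclude that $\hat x$ is the unique point of $P$ nearest to $x'$ in $\ell_1$ (or at least a valid output of $\lproj(x')$), so one more $\altProj$ round reproduces $x'$. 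Steps (ii)–(iii) are short once (i) is set up; the bookkeeping in (i), and making sure the box constraints $[0,1]^n$ don't force extra fractional coordinates, is where the real work lies.
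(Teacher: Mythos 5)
Your overall strategy (show that one application of $\altProj$ already lands on a fixed point of $\altProj$, using that an optimal $\ell_1$-projection onto the subset-sum LP is a vertex and hence has at most one fractional coordinate) is the same as the paper's, and your steps (i)--(ii) are fine. The gap is in step (iii): the claim that $\hat{x} = \lproj(\wt{x})$ is the nearest feasible point to $x' = \round(\hat{x})$ (or even a valid output of $\lproj(x')$) is simply false in general. The reason is that $\lproj(x')$ fixes the discrepancy $|ax' - b|$ by moving the coordinate with the \emph{largest} coefficient among those it is allowed to move, and this need not be the coordinate $j$ that was fractional in $\hat{x}$: when the original projection lowered several coordinates from $1$, the coordinates driven all the way to $0$ have coefficients at least $a_j$, and after rounding $j$ down, raising one of those instead of $j$ is strictly cheaper. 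Concretely, take $a = (1,2,3)$, $b = 9/5$, $\wt{x} = (1,1,1)$. Then $\hat{x} = \lproj(\wt{x}) = (1, 2/5, 0)$, which rounds to $x' = (1,0,0)$; but $\lproj(x') = (1, 0, 4/15)$, at $\ell_1$-distance $4/15 < 2/5$ from $x'$, so the unique minimizer is \emph{not} $\hat{x}$. Your argument ``$\hat{x}$ is within distance less than $1$ and no feasible point is closer'' therefore breaks down exactly in the cases that carry the real content of the lemma.

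The lemma is still true in such cases, but for a different reason, and this is what the paper's case analysis supplies: when $\lproj(x')$ moves a coordinate other than $j$, the amount moved stays on the same side of $\tfrac12$ as before, so $\round(\lproj(x')) = x'$ anyway. In the round-down case (fractional value below $\tfrac12$) the newly raised coordinate has a larger coefficient, so it is raised to a value at most the old fractional value, hence still below $\tfrac12$ (this is my example: $4/15 < 1/2$). In the round-up case one needs the small computation that the lowered coordinate ends at $1 - \frac{a_{k+1}}{a_k}(1-\bar{x}_{k+1}) \ge \tfrac12$ when $a_k > a_{k+1}$, plus a word about ties and the convention that $\lproj$ returns a vertex. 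Your plan only covers the subcases where the projection retraces its step back to $\hat{x}$; to complete the proof you must add the comparison of coefficients (which coordinate the new projection prefers) and the ``same side of $\tfrac12$'' argument for the case where it prefers a different one.
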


	 \begin{proof}
 Again to simplify the notation we omit the polyhedron $P$ when writing $\lproj$ and $\altProj$. Let $\bar{x}=\lproj(\wt{x})$ and recall it is an extreme point of $P$. Clearly, if $\wt{x} \in P$ then $\altProj(\wt{x})=\wt{x}$ and hence $\altProj^*(\wt{x}) = \altProj(\wt{x})$. Similarly, if $\bar{x}$ is a 0/1 point then $\altProj(\wt{x})=\bar{x}$, and again $\altProj^*(\wt{x}) = \altProj(\wt{x})$.

	 Thus, assume that $\wt{x}\notin P$ and $\bar{x}$ is not a 0/1 point. Since $\bar{x}$ is an extreme point of the subset-sum LP $P$ it has exactly 1 fractional coordinate, so by permuting indices we assume without loss of generality:
\begin{enumerate}
\item $\bar{x}_1 = \dots = \bar{x}_k = 1$.
\item $\bar{x}_{k + 1} \in (0, \ 1)$.
\item $\bar{x}_{k + 2}= \dots = \bar{x}_n = 0$
\item $a_{k + 2} \geq a_{k + 3}\geq \dots \geq a_{n}$.
\item $a_1 \leq a_2 \leq a_3 \leq \dots \leq a_{k}$.
\end{enumerate}

	Now we look at the points obtained after applying $\round(.)$ and $\lproj(.)$ to $\bar{x}$, namely let $\wt{x}' := \round(\bar{x}) = \altProj(\wt{x})$ and let $\bar{x}' := \lproj(\wt{x}')$. Notice that $\bar{x}'$ is obtained by solving:
\begin{eqnarray}
\textup{min}~&\sum_{ \{j \,|\, \wt{x}'_j = 0\}}x_j + \sum_{ \{j \,|\, \wt{x}'_j = 1\}}(1 - x_j) \notag\\
\textup{s.t.}~& ax=b \label{eq:lproj}\\
&\ 0\le x\le 1.\notag
\end{eqnarray}

\medskip
\noindent \textbf{Case 1:} $\bar{x}_{k+1}<1/2$. Then $\wt{x}'_i=1$ for all $i \le k$, $\wt{x}'_i =0$ for all $i \ge k+1$; also notice $\wt{x}'\le \bar{x}$, and hence $a\wt{x}'<b$; thus $\bar{x}'$ is obtained from $\wt{x}$' by increasing some components of 0 value. We have three subcases:
\begin{enumerate}
\item[a.] If $a_{k+1}>a_{k+2}$: then $a_{k+1}$ is the largest coordinate of $a$ where $\wt{x}'$ has value 0, so it follows from \eqref{eq:lproj} that $\bar{x}'$ is obtained from $\wt{x}'$ by raising its $(k+1)$-component from 0 to $\bar{x}_{k+1}$. Thus, $\bar{x}' = \bar{x}$, and hence $\altProj(\altProj(\wt{x})) = \round(\bar{x}')$ equals $\round(\bar{x}) = \altProj(\wt{x})$; this implies $\altProj^*(\wt{x}) = \altProj(\wt{x})$.

\medskip
\item[b.] If $a_{k+1} < a_{k+2}$: then $\bar{x}'$ is obtained from $\wt{x}$ by raising its $(k+2)$-component to a value that is at most $\bar{x}_{k+1} < 1/2$. Now, $\round(\bar{x}')=\wt{x}'$, so again we get $\altProj(\altProj(\wt{x})) = \round(\bar{x}') = \wt{x}' = \altProj(\wt{x})$ and we are done.

\medskip
\item[c.] If $a_{k+1} = a_{k+2}$: Since $\bar{x}'$ is a vertex of the subset-sum LP $P$, again it only has 1 fractional component (either $k+1$ or $k+2$) and then it is easy to see that $\bar{x}'$ is equal to the one in either Case (a) or Case (b) above; thus the result also holds for this case.
\end{enumerate}

\noindent \textbf{Case 2:} $\bar{x}_{k+1} \ge 1/2$. Then $\wt{x}'$ is such that $\wt{x}'_i=1$ for all $i \le k+1$ and $\wt{x}'=0$ for all $i \ge k+2$; also notice $\wt{x}' \ge \bar{x}$ and hence $a\wt{x}'>b$.
Now, consider $\bar{x}'=\lproj(\wt{x}')$:
\begin{enumerate}
\item[a.] If $a_{k} < a_{k+1}$: This is analogous to Case 1a: $\bar{x}'$ is obtained by lowering the $(k+1)$-coordinate of $\wt{x}'$ from 1 to $\bar{x}$, and thus $\bar{x}' = \bar{x}$; the rest of the proof is identical to Case 1a.

\medskip
\item[b.] If  $a_{k} > a_{k+1}$: In this case, $\bar{x}'$ is obtained by lowering the $k$-component of $\wt{x}'$. Since $a\bar{x}=a\bar{x}'=b$, and $k$ and $(k+1)$ are the only components where $\bar{x}$ and $\bar{x}'$ differ, we have: $a_k  + a_{k+1} \bar{x}_{k+1} = a_k \bar{x}'_k + a_{k+1} $. Hence $\bar{x}'_k = 1-\frac{a_{k+1}}{a_k}(1-\bar{x}_{k+1})\ge 1/2$ and $\round(\bar{x}') = \wt{x}'$; the rest of the proof is identical to Case 1b.

\medskip
\item[c.] If  $a_{k} = a_{k+1}$: Identical to Case 1c.
\end{enumerate} \mqed
\end{proof}

	Therefore, there is not much loss in looking at a ``compressed'' version of algorithm \WFP that packs repeated applications of $\altProj$ until stalling happens into a single $\altProj^*$; more formally, we have the following algorithm (stated in the pure-binary case to simplify the notation).

		\begin{algorithm}[h] \caption{\WFP-Compressed} \label{alg:WFPComp}
		\begin{algorithmic}[1]
    	\State \textbf{input parameter:} integer $\ell \ge 1$
    	\smallskip
    	\State Let $\bar{x}^0$  be an optimal solution of the LP relaxation
    	\State Let $\wt{z}^0 = \round(\bar{x}^0)$
    	\For{$\tau$ = 1,2,\ldots}
    		\State $\bar{\bz}^{\tau} = \altProj^*(\wt{\bz}^{\tau - 1})$ \label{alg:lprojComp}

				\smallskip
				\If{$\wt{\bz}^{\tau} \in P$}
					\State Return $\wt{\bz}^{\tau}$ \label{algo:WFPRetComp}
				\EndIf

				\smallskip

				\State $\wt{\bz}^{\tau} = \perturb{\ell}(\wt{\bz}^{\tau})$
			\EndFor
    \end{algorithmic}
  \end{algorithm}

	Intuitively, Lemma \ref{lemma:stabAltProj} should imply that packing the repeated applications of $\altProj$ into a single $\altProj^*$ should not save more than 1 iteration. To see this more formally, assume that both algorithms use as starting point the same optimal solution of the LP, so $\wt{z}^0 = \wt{x}^0$. Now condition on a scenario where we have $\wt{\bz}^{\tau} = \wt{\x}^t$ at the beginning of iterations $\tau$ and $t$ of algorithms \WFP-Compressed and \WFP respectively (for $\tau, t \ge 1$). Then we claim that either both algorithms return at the current iteration, or $\wt{\bz}^{\tau + 1}$ has the same distribution as either $\wt{\x}^{t + 1}$ or $\wt{\x}^{t + 2}$ (at the beginning of they respective iterations): If $\wt{\bz}^{\tau} = \wt{\x}^t \in P$, then both algorithms return; if $\wt{\x}^t \notin P$ but $\wt{\x}^t = \wt{\x}^{t-1}$, then both algorithms \WFP-Compressed and \WFP employ $\perturb{2}$ over $\wt{\bz}^{\tau}=\wt{\x}^t$, in which case $\wt{\bz}^{\tau + 1}$ has the same distribution as $\wt{\x}^{t + 1}$; finally, if $\wt{\x}^t \neq \wt{\x}^{t-1}$, then \WFP at the beginning of the next iteration will have $\wt{\x}^{t+1} = \altProj(\wt{\x}^t)$, which by Lemma \ref{lemma:stabAltProj} (and $t \ge 1$) equals $\wt{\x}^t$ itself, and so it will employ $\perturb{2}$ to $\wt{\x}^{t+1} = \wt{\x}^t$ and again we have that $\wt{\x}^{t+2}$ has the same distribution as $\wt{\bz}^{\tau + 1}$.

	Therefore, since we can employ this argument to couple iterations $\le \tau$ of \WFP-Compressed with iterations $\le 2\tau$ of \WFP, we have the following result.

	\begin{lemma} \label{lemma:coupling}
		Consider the application of algorithms \WFP and \WFP-Compressed over the subset-sum problem $P$. Then the probability that algorithm \WFP returns after at most $2T$ iterations is at least the probability that algorithm \WFP-Compressed after at most $T$ iterations.
	\end{lemma}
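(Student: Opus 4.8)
The plan is to turn the coupling sketch preceding the lemma into a formal argument by constructing an explicit joint probability space on which both algorithms are executed simultaneously, with the random index sets $\bI$ drawn in their $\perturb{2}$ steps coupled so that whenever both algorithms invoke $\perturb{2}$ on the same point they sample the \emph{same} set. This coupling of the $\perturb{2}$-randomness is well-defined because for a single subset-sum inequality $ax=b$ the minimal projected certificate is essentially canonical (its support is all of $[n]$), so line \ref{algo:randCoord} in either algorithm is just a uniform draw of two indices from $[n]$.

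On this space I would prove, by induction on $\tau$, the invariant: there is a non-decreasing map $\tau\mapsto t(\tau)$ with $t(0)=0$ and $\tau\le t(\tau)\le 2\tau$ for $\tau\ge1$, such that (a) the iterate held by \WFP-Compressed entering iteration $\tau+1$ equals the iterate held by \WFP entering iteration $t(\tau)+1$, (b) both executions have used exactly the same $\perturb{2}$-randomness through those iterations, and (c) neither algorithm has returned strictly earlier unless both have. The base step $\tau=0\to1$ uses $\wt{z}^0=\wt{x}^0$ together with Lemma \ref{lemma:stabAltProj} ($\altProj^*=\altProj$): \WFP's first $\altProj$ produces $\wt{x}^1=\altProj(\wt{x}^0)=\altProj^*(\wt{z}^0)$, the very point \WFP-Compressed reaches in its first iteration; then, according to whether $\wt{x}^1=\wt{x}^0$, \WFP either stalls immediately ($t(1)=1$) or performs one more, now idempotent, $\altProj$ step and stalls ($t(1)=2$), in both cases perturbing $\wt{x}^1$ exactly as \WFP-Compressed does. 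The inductive step is the three-case analysis applied to the common iterate $v:=\wt{z}^\tau=\wt{x}^{t(\tau)}$: (i) if $v\in P$ both return; (ii) if $v\notin P$ and $v=\wt{x}^{t(\tau)-1}$, both apply $\perturb{2}$ to $v$ with the coupled draw, preserving the invariant with $t(\tau+1)=t(\tau)+1$; (iii) if $v\notin P$ and $v\neq\wt{x}^{t(\tau)-1}$, then \WFP performs one more $\altProj$, which, since $t(\tau)\ge1$ makes $v=\altProj(\wt{x}^{t(\tau)-1})$ an $\altProj$-fixed point by Lemma \ref{lemma:stabAltProj}, leaves $v$ unchanged, thereby forcing the stalling condition and hence a $\perturb{2}$ applied to $v$ with the same coupled draw, preserving the invariant with $t(\tau+1)=t(\tau)+2$.

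Granting the invariant, the conclusion follows at once: on the coupled space the event ``\WFP-Compressed returns within its first $T$ iterations'' is contained in the event ``\WFP returns within its first $t(T)\le2T$ iterations'', and since the coupling leaves the marginal law of each algorithm unchanged, the probability that \WFP returns within $2T$ iterations is at least that of \WFP-Compressed returning within $T$ iterations. The main difficulty is not conceptual but organizational: fixing unambiguously what ``the iterate entering an iteration'' means in each algorithm (the input to the $\lproj/\altProj$ step versus the value after a possible perturbation), treating the base case cleanly (where, unlike later iterations, $\wt{x}^0$ need not itself be an $\altProj$-fixed point), and checking that case (iii) genuinely meets the hypothesis of Lemma \ref{lemma:stabAltProj}, namely that $\wt{x}^{t(\tau)}=\altProj(\wt{x}^{t(\tau)-1})$ whenever $t(\tau)\ge1$.
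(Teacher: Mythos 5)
Your proposal is correct and follows essentially the same route as the paper: the paper's own justification is precisely this coupling, conditioning on matched iterates, using the idempotence from Lemma \ref{lemma:stabAltProj} to show each iteration of \WFP-Compressed corresponds to at most two iterations of \WFP via the same three-case analysis (return; stall and perturb; one extra, now trivial, $\altProj$ followed by a forced stall and perturb). Your version merely makes the joint probability space and the induction on the index map $t(\tau)\le 2\tau$ explicit, which is a faithful formalization rather than a different argument.
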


	Therefore, it suffices to upper bound the number of iterations of \WFP-Compressed until it returns. To avoid ambiguity, let $\bz^{\tau}$ be the value of $\wt{\bz}^{\tau}$ at the \emph{beginning} of iteration $\tau$ of \WFP-Compressed. Notice that $z^1 = \altProj^*(\wt{x}^0)$, and $\bz^{\tau+1} = \altProj^*(\perturb{2}(\bz^\tau))$ for $\tau \ge 2$. It suffices to show that with probability at least $1 - (1-p)^{T/n}$, there is $\tau \le T/2$ such that $\bz^\tau$ belongs to $P$.

	To do so, for $\wt{x} \in \{0,1\}^n$ and $I \subseteq [n]$ let $\flip(\wt{x}, I)$ denote the 0/1 vector obtained starting from $\wt{x}$ and flipping the value of all coordinates that belongs to $I$. Notice that (up to scaling) the only possible projected certificates for our subset-sum problem are $ax \ge b$ and $ax \le b$.
	 Since we have assumed that the vector $a$ has full support, it follows that on this problem $\perturb{2}(\wt{x}) = \flip(\wt{x}, \bI)$ for $\bI$ being the set obtained by sampling independently two indices uniformly from $[n]$.

	The next lemma then shows that there is always a ``lucky choice'' of set $\bI$ in $\perturb{2}(\bz^\tau)$ that brings $\bz^{\tau + 1} = \altProj^*(\perturb{2}(\bz^\tau))$ closer to a fixed solution $x^*$ to the subset-sum problem.

The following definition is convenient.
\begin{definition}\label{defn:stall}
A point $\wt{x} \in \{0, 1\}^n$ is called a stalling solution if $\altProj(\wt{x}) = \wt{x}$.
\end{definition}

	\begin{lemma} \label{lemma:2choice}
		Let $x^* \in \{0,1\}^n$ be a feasible solution to the subset-sum problem. Consider $\wt{x} \in \{0,1\}^n$ with $a \wt{x} \neq b$ that satisfies the fixed point condition $\altProj(\wt{x}) = \wt{x}$. Then  there is a set $I \subseteq [n]$ of size at most 2 such that the point $x' = \altProj^*_P(\flip(\wt{x}, I))$ is closer to $x^*$ than $\wt{x}$, namely $\|x' - x^*\|_0 \le \|\wt{x} - x^*\|_0 - 1$.
	\end{lemma}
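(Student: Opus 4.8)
Throughout, fix a feasible $x^*$ (so $ax^* = b$). The plan is to turn the hypothesis $\altProj(\wt x)=\wt x$ into structural information about $\wt x$, then to exhibit an explicit ``lucky'' set $I$ of one or two coordinates on which $\wt x$ and $x^*$ disagree — so that every flip moves $\wt x$ strictly toward $x^*$ — and finally to bound how much of this gain the projection $\altProj$ can hand back. Assume without loss of generality that $a\wt x > b$; the case $a\wt x < b$ is handled by the same argument with the roles of $0$ and $1$ interchanged. Let $D=\{l:\wt x_l\neq x^*_l\}$ and split it as $D^+=\{l\in D:\wt x_l=1\}$ (so $x^*_l=0$ there) and $D^-=\{l\in D:\wt x_l=0\}$. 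Since $a$ has full support, $\sum_{l\in D^+}a_l-\sum_{l\in D^-}a_l=a\wt x-b>0$, so $D^+\neq\emptyset$; and flipping any coordinate of $D$ decreases $\|\cdot-x^*\|_0$ by exactly one, while flipping one outside $D$ increases it. So it suffices to pick $I\subseteq D$ with $|I|\le 2$ for which $\altProj^*(\flip(\wt x,I))$ is no farther from $x^*$ than $\|\wt x-x^*\|_0-1$.

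\textbf{Two ingredients.} First, structure from stalling: writing $a_{\max}:=\max\{a_l:\wt x_l=1\}$, attained at $j_{\max}$, the vertex $\bar x=\lproj(\wt x)$ is obtained from $\wt x$ by lowering its largest-weight value-$1$ coordinates, so $\round(\bar x)=\wt x$ forces that only $j_{\max}$ is moved and that it stays $\ge 1/2$, i.e.\ $r:=a\wt x-b\le a_{\max}/2$. Second, a control lemma for $\altProj$, derived from Lemma~\ref{lemma:stabAltProj} (which gives $\altProj^*=\altProj$ here): for any $\wt y\in\{0,1\}^n$, (i) if $a\wt y=b$ then $\altProj^*(\wt y)=\wt y$; (ii) if $0<a\wt y-b\le\tfrac12\max\{a_l:\wt y_l=1\}$ then $\wt y$ is a stalling solution (Definition~\ref{defn:stall}), so $\altProj^*(\wt y)=\wt y$; (iii) if $0<a\wt y-b\le\max\{a_l:\wt y_l=1\}$ then $\altProj^*(\wt y)$ differs from $\wt y$ in at most one coordinate ($\lproj$ moves only the largest-weight value-$1$ coordinate, to a value in $[0,1)$, which is then rounded); and symmetric statements when $a\wt y<b$.

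\textbf{The casework.} If some $i\in D^+$ has $a_i\le r$, take $I=\{i\}$: then $\wt y=\flip(\wt x,\{i\})$ has $a\wt y-b=r-a_i\ge 0$, and is either feasible ($a_i=r$) or satisfies $0<a\wt y-b<r\le a_{\max}/2$ with $j_{\max}$ still its largest-weight value-$1$ coordinate (since $a_i\le a_{\max}/2<a_{\max}$), so by (ii) it is a stalling solution; in both cases $\altProj^*(\wt y)=\wt y$, one unit closer to $x^*$. Otherwise every $i\in D^+$ has $a_i>r$, which forces $D^-\neq\emptyset$. Let $i_0=\argmin_{l\in D^+}a_l$, $j_0=\argmax_{l\in D^-}a_l$, $I=\{i_0,j_0\}$, and $\delta:=a_{i_0}-r>0$, so $\wt y=\flip(\wt x,\{i_0,j_0\})$ has $a\wt y-b=a_{j_0}-\delta$. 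If $a_{j_0}\ge\delta$ then $0\le a\wt y-b<a_{j_0}\le\max\{a_l:\wt y_l=1\}$ (as $\wt y_{j_0}=1$), so by (i)/(iii) $\altProj^*(\wt y)$ differs from $\wt y$ in at most one coordinate while $\wt y$ is two units closer to $x^*$: net gain at least one. If $a_{j_0}<\delta$, then $\wt y$ has $a\wt y<b$ with deficit $d=\delta-a_{j_0}>0$, and $i_0$ is now a value-$0$ coordinate of weight $a_{i_0}=r+a_{j_0}+d>d$, so the $a\wt y<b$ version of (iii) applies and $\altProj^*(\wt y)$ changes at most one coordinate $p$ of $\wt y$, obtained by $\lproj$ raising $\wt y$'s largest-weight value-$0$ coordinate and rounding. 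If $p$ rounds down we keep the full two-unit gain; if $p$ rounds up, the only net changes from $\wt x$ are that $i_0$ may revert to $1$, that $j_0$ becomes $1=x^*_{j_0}$, and that $p$ becomes $1$, and a short check of the three possibilities $p=i_0$, $p\in D^-$, $p\notin D$ shows $\|\altProj^*(\wt y)-x^*\|_0\le\|\wt x-x^*\|_0-1$ in each case — the flip of $j_0$ always survives. This exhausts the cases.

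\textbf{The main obstacle.} The delicate case is the last ``double overshoot'' one: a single flip toward $x^*$ already pushes $a\cdot$ past $b$, and the natural two-coordinate flip still lands on the wrong side of $b$, so one must argue that the re-projection cannot undo both flips simultaneously. What is protected is the flip of the $D^-$-coordinate $j_0$: it becomes a value-$1$ coordinate, and $\lproj$ applied to a point with $a\cdot<b$ never lowers value-$1$ coordinates, so it can at worst spoil the $i_0$-flip. This is exactly why $\ell=2$ is needed — with $\ell=1$ the single overshooting flip can be (and, as the Remark shows, is) reversed by the projection, giving no progress. Everything outside this case, and the ``short check'' above, is routine bookkeeping with the inequality $r\le a_{\max}/2$ and the identity $\sum_{l\in D^+}a_l-\sum_{l\in D^-}a_l=r$.
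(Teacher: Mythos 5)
Your proof is correct and follows essentially the same route as the paper's: you exploit stalling through the single fractional coordinate of the projected vertex (the paper's Claim 1, your bound $r \le a_{\max}/2$), flip one or two coordinates of the disagreement set, and protect the gain with the facts that an $\ell_1$-projection value below $\tfrac12$ yields a fixed point and below $1$ moves at most one coordinate (the paper's Claim 2, your (ii)--(iii)), finishing by the triangle inequality; the only real difference is that you organize the cases by weight arithmetic ($a_i \le r$ versus all $a_i > r$) rather than by support containment, which is the same dichotomy in different clothing. Two cosmetic caveats that do not affect validity: the projection need not move the \emph{largest}-weight value-$1$ coordinate (only that the moved coordinate has weight at most $a_{\max}$, which is all you actually use), and the boundary case ``residual $= \tfrac12\max$'' in (ii) depends on the rounding convention (and fails in the $a\wt{x} < b$ direction under the paper's convention), but your casework only ever invokes the strict inequality.
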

	\begin{proof}
		Again to simplify the notation we omit $P$ from $\lproj$ and $\altProj$, and use $\flip(\wt{x}, j)$ instead of $\flip(\wt{x}, \{j\})$ in the singleton case. 


We start with a couple of claims.
\paragraph{Claim 1} 		Suppose $\wt{x} \in \{0,1\}^n$ is a stalling point. If $a \wt{x} < b$, then there is $k \notin \supp(\wt{x})$ such that  $\lproj(\wt{x})_i = \wt{x}_i$ for all $i \neq k$, and  $\lproj(\wt{x})_k \in (0,\frac{1}{2})$. Similarly, if $a \wt{x} > b$, then there is $k \in \supp(\wt{x})$ such that $\lproj(\wt{x})_i = \wt{x}_i$ for all $i \neq k$, and $\lproj(\wt{x})_k \in [\frac{1}{2}, 1)$.

	\begin{proof}[Proof of Claim 1]
		We only prove the first statement, the proof of the second is completely analogous. Since $\wt{x}$ is stalling we have that $\round(\lproj(\wt{x})) = \wt{x}$, and since $\lproj(\wt{x})$ is an extreme point of the subset-sum problem $P$ it has at most 1 fractional component, and hence only differs in one component $k$ from $$\round(\lproj(\wt{x})) = \wt{x}.$$ Since $a \cdot \lproj(\wt{x}) = b > a \cdot \wt{x}$, we have that $\wt{x}_k = 0$ and $\lproj(\wt{x})_k > 0$; since $\round(\lproj(\wt{x})_k) = \wt{x}_k = 0$, we have $\lproj(\wt{x})_k < \frac{1}{2}$.
	\end{proof}

\paragraph{Claim 2}
		Consider a point $\wt{x} \in \{0,1\}^n$.
		\begin{enumerate}
			\item If the objective value of \eqref{eq:lproj} is strictly less than $\frac{1}{2}$, then $\altProj(\wt{x}) = \wt{x}$.
			\item If the objective value of \eqref{eq:lproj} is strictly less than 1, then $\|\altProj(\wt{x}) - \wt{x}\|_0 \le 1$.
		\end{enumerate}

	\begin{proof}[Proof of Claim 2]
		Let $\bar{x} = \lproj(\wt{x})$ be an optimal solution for \eqref{eq:lproj}. Proof of Part 1: the assumption implies that $|\bar{x}_i - \wt{x}_i| < \frac{1}{2}$ for all $i$, which directly implies that $\altProj(\wt{x}) = \round(\bar{x}) = \wt{x}$.

		Proof of Part 2: the assumption implies that there can be at most one index $j$ with $|\bar{x}_j - \wt{x}_j|\geq \frac{1}{2}$, which implies that for all $i \neq j$, $\altProj(\wt{x})_i = \round(\bar{x}_i) = \wt{x}_i$ and the result follows. \mqed
	\end{proof}

Now we are ready to present the proof of Lemma \ref{lemma:2choice}. Let $x^*$ and $\tilde{x}$ be as in the statement of the Lemma. 
	From Lemma \ref{lemma:stabAltProj} we know that $$\altProj^*(\flip(\wt{x}, J)) = \altProj(\flip(\wt{x}, J)),$$ so it suffices to work with the right-hand side instead.	Since $\wt{x}\neq x^*$
	we have $\supp(\wt{x}) \neq \supp(x^*)$. We separate the proof in three cases depending on the relationship between these supports.

\medskip \noindent \textbf{Case 1:} $\supp(\wt{x}) \subsetneq \supp(x^*)$: Pick any $j \in \supp(x^*) \setminus \supp(\wt{x})$ and notice that $\|\flip(\wt{x},j) - x^*\|_0 = \|\wt{x} - x^*\|_0 - 1$. Notice that both $\supp(\wt{x})$ and $\supp(\flip(\wt{x},j))$ are
contained in the support of $x^*$, and hence we have $a \wt{x} \le b$ and $a \cdot \flip(\wt{x},j) \le b$. Moreover, since $\flip(\wt{x},j) \ge \wt{x}$, it is easy to see that the optimal value of \eqref{eq:lproj} for $\flip(\wt{x}, j)$ is \emph{strictly less than} that for $\wt{x}$ (we need to raise fewer variables to make the point satisfy $ax = b$), which by Claim 1 is at most $\frac{1}{2}$. Thus, employing Part 1 of Claim 2 to $\flip(\wt{x},j)$ gives that $\altProj(\flip(\wt{x},j)) = \flip(\wt{x},j)$, which is the desired point closer to $x^*$.

\medskip \noindent \textbf{Case 2:} $\supp(x^*) \subsetneq \supp(\wt{x})$: The proof is the same as above, with the only change that we take $j \in \supp(\wt{x}) \setminus \supp(x^*)$.

\medskip \noindent \textbf{Case 3:} The supports $\supp(x^*)$ and $\supp(\wt{x})$ are not contained in one another. In this case $a\wt{x}$ can be either $< b$ or $>b$:
\begin{enumerate}
\item If $a\wt{x}< b$. Take $m\in \supp(x^*)\setminus \supp(\wt{x})$. If $a\cdot \flip(\wt{x},m)\le b$, then we can argue exactly as in Case 1 to get that $\altProj(\flip(\wt{x},m)) = \flip(\wt{x},m)$, which is closer to $x^*$ than $\wt{x}$. So consider the case $a \cdot \flip(\wt{x},m) > b$. Take $i \in \supp(\wt{x})\setminus \supp(x^*)$ and consider $\flip(\wt{x}, \{m,i\})$, which is 2 units closer to $x^*$ in Hamming distance.

We claim that the optimal value of \eqref{eq:lproj} for $\flip(\wt{x}, \{m,i\})$ is strictly less than 1. Suppose $a \cdot \flip(\wt{x}, \{m,i\}) \le b$; since $a \cdot \flip(\wt{x}, m) > b$ (notice $\flip(\wt{x}, m)$ is obtained from $\flip(\wt{x}, \{m,i\})$ by increasing coordinate $i$ to 1), this means that we can make $\flip(\wt{x}, \{m,i\})$ satisfy $ax = b$ by increasing coordinate $i$ to a value \emph{strictly less} than 1, thus upper bounding the optimum of \eqref{eq:lproj}. On the other hand, consider $a \cdot \flip(\wt{x}, \{m,i\}) > b$; notice $a \cdot \flip(\wt{x}, i) \le a \cdot \wt{x} < b$ (the last uses a running assumption), and thus again we can make $\flip(\wt{x}, \{m,i\})$ satisfy $ax = b$ by decreasing coordinate $m$ to a value strictly smaller than 1. This proves the claim.

With this claim in place, we can just employ Part 2 of Claim 2 to $\flip(\wt{x}, \{m,i\})$ and triangle inequality to obtain that $\|\altProj(\flip(\wt{x}, \{m,i\})) - x^*\|_0$ is at most $$1 + \|\flip(\wt{x}, \{m,i\}) - x^*\|_0 = 1 + \|\wt{x} - x^*\|_0 - 2,$$ which gives the desired result.

\item If $a\bar{x}> b$. The proof of this case mirrors that of the above case (only with the inequalities $<$ and $>$ reversed throughout).
\end{enumerate}
\mqed
	\end{proof}

	Notice that since $\bz^\tau$ is obtained from $\altProj^*(.)$, it satisfies the fixed point condition $\altProj(\bz^\tau) = \bz^\tau$. Thus, as long as $\bz^{\tau}$ does not belong to $P$ we can apply the above lemma to obtain that with probability at least $\frac{1}{n^2}$ we have $\bI$ in $\perturb{2}$ equal to the set $I$ in the lemma and thus the iterate moves closer to a feasible solution; more formally we have the following.

	\begin{corollary} \label{cor:2choice}
		Let $x^* \in \{0,1\}^n$ be a feasible solution to the subset-sum problem $P$. Then $$\Pr\Big(\|\bz^{\tau + 1} - x^*\|_0 \le \|\bz^\tau - x^*\|_0 - 1 ~\Big\vert~ \bz^{\tau} \notin P \Big) \ge \frac{1}{n^2}.$$
	\end{corollary}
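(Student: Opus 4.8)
The plan is to obtain the corollary as a one-step consequence of Lemma~\ref{lemma:2choice}, combined with a count of the possible outcomes of the random index set $\bI$ drawn inside $\perturb{2}$. First I would condition on the event $\bz^\tau \notin P$ and check that the hypotheses of Lemma~\ref{lemma:2choice} are met with $\wt{x} = \bz^\tau$: since $\bz^\tau$ is produced by an application of $\altProj^*$ (recall $\bz^1 = \altProj^*(\wt{x}^0)$ and $\bz^{\tau+1} = \altProj^*(\perturb{2}(\bz^\tau))$), Lemma~\ref{lemma:stabAltProj} gives $\altProj(\bz^\tau) = \altProj(\altProj^*(\cdot)) = \altProj^*(\cdot) = \bz^\tau$, so $\bz^\tau$ is a stalling solution; and because $P = \{x \in [0,1]^n : ax = b\}$ while $\bz^\tau \in \{0,1\}^n$, the condition $\bz^\tau \notin P$ is exactly $a\bz^\tau \neq b$. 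Hence Lemma~\ref{lemma:2choice} supplies a ``lucky'' set $I \subseteq [n]$ with $|I| \le 2$ such that $x' := \altProj^*(\flip(\bz^\tau, I))$ satisfies $\|x' - x^*\|_0 \le \|\bz^\tau - x^*\|_0 - 1$.

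Next I would show that $\perturb{2}$ performs precisely this flip with probability at least $1/n^2$. As recorded just above the statement, full support of $a$ implies $\perturb{2}(\bz^\tau) = \flip(\bz^\tau, \bI)$, where $\bI$ is the \emph{set} obtained from two indices drawn independently and uniformly from $[n]$. If $|I| = 2$, the two ordered draws hit $I$ (in either order) with probability $2/n^2$; if $|I| = 1$, both draws hit the single element with probability $1/n^2$. Either way $\Pr[\bI = I] \ge 1/n^2$, and on the event $\bI = I$ we have $\perturb{2}(\bz^\tau) = \flip(\bz^\tau, I)$, hence $\bz^{\tau+1} = \altProj^*(\flip(\bz^\tau, I)) = x'$, which is one Hamming unit closer to $x^*$. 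Chaining this with the bound on $\|x' - x^*\|_0$ yields the claimed conditional probability.

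Because the analytic content is entirely packaged inside Lemma~\ref{lemma:2choice}, I do not anticipate a genuine obstacle; the two points that merely require care are (i) recording that every $\bz^\tau$ is a fixed point of $\altProj$ (a stalling solution), so that Lemma~\ref{lemma:2choice} applies, and (ii) the small case split $|I| \in \{1,2\}$ when lower-bounding $\Pr[\bI = I]$, which is needed because the lemma only guarantees $|I| \le 2$ rather than $|I| = 2$. The rest is a direct substitution of the two previous results.
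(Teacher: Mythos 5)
Your proposal is correct and follows essentially the same route as the paper: the paper also obtains the corollary by noting that $\bz^\tau$ is a fixed point of $\altProj$ (being an output of $\altProj^*$), that $\bz^\tau \notin P$ means $a\bz^\tau \neq b$, and that the full support of $a$ makes $\perturb{2}$ sample $\bI$ uniformly from $[n]$, so the lucky set $I$ from Lemma~\ref{lemma:2choice} is hit with probability at least $1/n^2$. Your explicit case split on $|I| \in \{1,2\}$ just spells out a detail the paper leaves implicit.
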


	Now we can conclude the proof of Theorem \ref{thm:WFPGen} arguing just like in the proof of Theorem \ref{thm:walkSATMI2}.

	\begin{proof}[Proof of Theorem \ref{thm:WFPGen}]
		Consider $x^* \in P$ and let $\bZ_\tau = \|\bz^{\tau} - x^*\|_0$. Notice that $\bZ_\tau = 0$ implies $\bz^\tau = x^*$ and hence $\bz^\tau \in P$. Corollary \ref{cor:2choice} gives that $\Pr(\bZ_{\tau + 1} \le \bZ_{\tau} - 1 \mid \bz^\tau \notin P) \ge \frac{1}{n^2}$. Therefore, if we start at iteration $\tau$ and for all the next $\bZ_{\tau}$ iterations either the iterate $\bz^{\tau'}$ belongs to $P$ or the algorithm reduces $\bZ_{\tau'}$, it terminates by time $\tau + \bZ_{\tau}$. Thus, with probability at least $(1/n^2)^{\bZ_{\tau}} \ge (1/n^2)^n = p$ the algorithm terminates by time $t + \bZ_{\tau} \le t + n$.

		To conclude the proof, let $\alpha = \lfloor T/n \rfloor$  and call time steps $i \cdot n$, \ldots, $(i+1) \cdot n -1$ the $i$-th block of time. From the above paragraph, the probability that there is $\tau$ in the $i$th block of time such that $\bz^\tau \in P$ conditioned on $\bz^{i \cdot n - 1} \notin P$ is at least $p$. Using the chain rule of probability gives that the probability that there is no $\bz^\tau \in P$ within any of the $\alpha$ blocks is at most $(1-p)^\alpha$. This concludes the proof.  \mqed
	\end{proof}


\section{Computations}\label{sec:computation}

In this section, we describe the algorithms that we have implemented and report computational experiments comparing the performance of the original Feasibility Pump 2.0 algorithm from~\cite{FischettiS09}, which we denote by \FPorig, to our modified code that uses the new perturbation procedure. The code is based on the current version of the Feasibility Pump 2.0 code (the one available on the NEOS servers), which is implemented in C++ and linked to IBM ILOG CPLEX 12.6.3~\cite{CPLEX} for preprocessing and solving LPs. All features such as constraint propagation which are part of the Feasibility Pump 2.0 code have been left unchanged.

All algorithms have been run on a cluster of identical machines, each equipped with an Intel Xeon CPU E3-1220 V2 running at 3.10GHz and 16 GB of RAM. Each run had a time limit of half an hour.


\subsection{WalkSAT-based perturbation}

In preliminary tests, we implemented the algorithm \WFP as described in the previous section. However, its performance was not competitive with \FPorig. In hindsight, this can be justified by the following reasons:
\begin{itemize}
\item Picking a fixed $\ell$ can be tricky. Too small or too big a value can lead to slow convergence in practical implementations.
\item Using $\perturb{\ell}$ at each perturbation step can be overkill, as in most cases the original perturbation scheme does just fine.
\item Computing the minimal certificate is too expensive, as it requires solving LPs.
\end{itemize}

For the reasons above, we devised a more conservative implementation of a perturbation procedure inspired by \wSAT, which we denote by \WFPbase. The algorithm works as follows. Let $F\subset [n]$ be the set of indices with positive fractionality $|\wt{x}_j - \bar{x}_j|$. If $TT \le |F|$, then the perturbation procedure is just the original one in \FPorig.
Else, let $S$ be the union of the supports of the constraints that are not satisfied by the current point $(\wt{x}, \bar{y})$.
We select the $|F|$ indices with largest fractionality $|\wt{x}_j - \bar{x}_j|$ and select uniformly at random $\textup{min}\{|S|, TT-|F|\}$ indices from $S$, and flip the values in $\wt{x}$ for all the selected indices.

Note also that the above procedure applies only to the case in which a cycle of length one is detected. In case of longer cycle, we use the very same restart strategy of \FPorig.

\subsection{Computational results}

We tested the two algorithms on two classes of models: two-stage stochastic models, and the MIPLIB 2010 dataset.


\paragraph{Two-stage stochastic models.} In order to validate the hypothesis suggested by the theoretical results that our walkSAT-based perturbation should work well on almost-decomposable models, we tested \WFPbase on two-stage stochastic models. These are the deterministic equivalent of two-stage stochastic programs and have the form 
\begin{align*}
	&Ax + D^i y^i \le b^i ~~, i \in \{1, \ldots, k\}\\
	&x \in \{0,1\}^p\\
	&y^i \in \{0,1\}^q ~~, i \in \{1, \ldots, k\}.
\end{align*}
 The variables $x$ are the first-stage variables, and $y^i$ are the second-stage variables for the $i$th scenario. Notice that these second-stage variables are different for each scenario, and are only coupled through the first-stage variables $x$. Thus, as long as the number of scenarios is reasonably large compared to dimensions of $x, y^1, \ldots, y^k$, these problems are to some extent almost-decomposable.

	For our experiments we randomly generated instances of this form as follows: (1) the entries in $A$ and the $D^i$'s are independently and uniformly sampled from $\{-10, \ldots, 10\}$; (2) to guarantee feasibility, a 0/1 point is sampled uniformly at random from $\{0,1\}^{p + k \cdot q}$ and the right-hand sides $b^i$ are set to be the smallest ones that make this points feasible. We generated 50 instances, 5 for each setting of parameters $k=\{5,15,25,35,45\}$, $p = \{10,20\}$, $q = 10$.
	
	We compared the two algorithms \FPorig and \WFPbase over these instances using ten different random seeds. A seed by seed comparison is reported in Table~\ref{tab:stoch}. In the tables, \texttt{\#found} denotes the number of models for which a feasible solution was found, while \texttt{time} and \texttt{itr.} report the shifted geometric means~\cite{Achterberg07} of running times and iterations, respectively.

\begin{table}[ht]
\begin{center}
\begin{tabular}{lrrrrrr}
\toprule
& \multicolumn{2}{c}{\# found} & \multicolumn{2}{c}{time (s)} & \multicolumn{2}{c}{itr.}\\
\cmidrule{1-7}
Seed & \FPorig & \WFPbase & \FPorig & \WFPbase & \FPorig & \WFPbase \\
\midrule
  1 &  28 & \textbf{31} &  4.12 & \textbf{3.36} &  124.43 & \textbf{76.02} \\
  2 &  26 & \textbf{35} &  4.06 & \textbf{3.17} &  122.51 & \textbf{82.85} \\
  3 &  25 & \textbf{37} &  4.00 & \textbf{3.02} &  117.74 & \textbf{72.50} \\
  4 &  26 & \textbf{36} &  4.28 & \textbf{3.40} &  119.82 & \textbf{75.17} \\
  5 &  25 & \textbf{31} &  4.20 & \textbf{3.44} &  124.41 & \textbf{81.66} \\
  6 &  26 & \textbf{35} &  3.98 & \textbf{3.56} &  122.74 & \textbf{79.73} \\
  7 &  25 & \textbf{27} &  4.22 & \textbf{3.98} &  126.77 & \textbf{91.59} \\
  8 &  28 & \textbf{38} &  3.82 & \textbf{3.10} &  112.91 & \textbf{73.92} \\
  9 &  25 & \textbf{31} &  4.22 & \textbf{3.67} &  117.61 & \textbf{83.46} \\
 10 &  25 & \textbf{32} &  4.12 & \textbf{3.57} &  116.92 & \textbf{88.23} \\
\bottomrule
\end{tabular}
\caption{Aggregated results on two-stage stochastic models.}
\label{tab:stoch}
\end{center}
\end{table}

	Notice that \WFPbase performed substantially better than \FPorig, in agreement with our theoretical results. Using the walkSAT-based perturbation the average number of successful instances increased by $28\%$, while average runtime was reduced by $17\%$ and average number of iterations was reduced by $33\%$.


\paragraph{MIPLIB 2010.}	We also compared the algorithms on a subset of models from MIPLIB 2010~\cite{MIPLIB2010}.
The subset is defined by the models for which at least one of the two algorithms took more than 20 iterations to find a feasible solution (if any); the remaining models are basically too easy and not useful for comparing the two perturbation procedures. We are thus left with a subset of 82 models. Again we compared the two algorithms using ten different random seeds. A seed by seed comparison is reported in Table~\ref{tab:mip2010}.

	Even though the improvement in this heterogeneous testbed was less dramatic as in the two-stage stochastic models, as expected, \WFPbase still consistently dominates \FPorig: it can find more solutions in 7 out 10 cases (in the remaining 3 cases it is a tie), taking always less time and almost always fewer iterations. On average over the seeds, \WFPbase increased the number of successfully solved instances by $6\%$, reduced by the computation time by $8.4\%$ and reduced the number of iterations by $5.9\%$. 

	In conclusion, given that the suggested modification is very simple to implement, and appears to dominate \FPorig consistently, it suggests it is a good idea to add it as a feature in all future feasibility pump codes. 
	

\begin{table}[ht]
\begin{center}
\begin{tabular}{lrrrrrr}
\toprule
& \multicolumn{2}{c}{\# found} & \multicolumn{2}{c}{time (s)} & \multicolumn{2}{c}{itr.}\\
\cmidrule{1-7}
Seed & \FPorig & \WFPbase & \FPorig & \WFPbase & \FPorig & \WFPbase \\
\midrule
  1 &  33 & \textbf{34} &  1070.35 & \textbf{1068.09}  & \textbf{103.38} & 104.59 \\
  2 &  \textbf{34} & \textbf{34} &  1073.03 & \textbf{1004.84}  & 108.65 & \textbf{104.05} \\
  3 &  34 & \textbf{39} &  1125.44 &  \textbf{976.16}  & 107.10 &  \textbf{96.18} \\
  4 &  34 & \textbf{36} &  1045.10 &  \textbf{976.31}  & 101.30 &  \textbf{96.24} \\
  5 &  31 & \textbf{32} &  1033.60 &  \textbf{974.56}  &  96.67 &  \textbf{94.36} \\
  6 &  \textbf{34} & \textbf{34} &   974.47 &  \textbf{880.05}  &  99.61 &  \textbf{91.20} \\
  7 &  33 & \textbf{36} &   972.96 &  \textbf{877.45}  & 102.39 &  \textbf{95.04} \\
  8 &  29 & \textbf{32} &  1085.82 & \textbf{1049.22}  & 104.63 & \textbf{103.22} \\
  9 &  \textbf{37} & \textbf{37} &  1065.50 &  \textbf{937.19}  & 101.44 &  \textbf{91.73} \\
 10 &  32 & \textbf{37} &  1096.99 &  \textbf{913.50}  & 103.01 &  \textbf{90.85} \\
\bottomrule
\end{tabular}
\caption{Aggregated results on MIPLIB2010.}
\label{tab:mip2010}
\end{center}
\end{table}


\section*{Acknowledgments}

	We would like to thank Andrea Lodi for discussions and clarifications on Feasibility Pump. Santanu S. Dey and Andres Iroume would like to gratefully acknowledge the support of NSF grants CMMI 1562578 and CMMI 1149400 respectively.

\bibliographystyle{alpha}
\bibliography{test}

\newcommand{\etalchar}[1]{$^{#1}$}
\begin{thebibliography}{KAA{\etalchar{+}}11}

\bibitem[AB07]{AchterbergB07}
Tobias Achterberg and Timo Berthold.
\newblock Improving the feasibility pump.
\newblock {\em Discrete Optimization}, 4(1):77--86, 2007.

\bibitem[Ach07]{Achterberg07}
Tobias Achterberg.
\newblock {\em Constraint {I}nteger {P}rogramming}.
\newblock PhD thesis, Technische Universit{\"a}t Berlin, 2007.

\bibitem[BCLM09]{BonamiCLM09}
Pierre Bonami, G{\'{e}}rard Cornu{\'{e}}jols, Andrea Lodi, and Fran{\c{c}}ois
  Margot.
\newblock A feasibility pump for mixed integer nonlinear programs.
\newblock {\em Math. Program.}, 119(2):331--352, 2009.

\bibitem[BEE{\etalchar{+}}14]{BolandEEFST14}
Natashia~L. Boland, Andrew~C. Eberhard, Faramroze~G. Engineer, Matteo
  Fischetti, Martin W.~P. Savelsbergh, and Angelos Tsoukalas.
\newblock Boosting the feasibility pump.
\newblock {\em Math. Program. Comput.}, 6(3):255--279, 2014.

\bibitem[BEET12]{BolandEET12}
Natashia~L. Boland, Andrew~C. Eberhard, Faramroze~G. Engineer, and Angelos
  Tsoukalas.
\newblock A new approach to the feasibility pump in mixed integer programming.
\newblock {\em {SIAM} Journal on Optimization}, 22(3):831--861, 2012.

\bibitem[BFL07]{BertaccoFL07}
Livio Bertacco, Matteo Fischetti, and Andrea Lodi.
\newblock A feasibility pump heuristic for general mixed-integer problems.
\newblock {\em Discrete Optimization}, 4(1):63--76, 2007.

\bibitem[DFLL10]{DAmbrosioFLL10}
Claudia D'Ambrosio, Antonio Frangioni, Leo Liberti, and Andrea Lodi.
\newblock Experiments with a feasibility pump approach for nonconvex minlps.
\newblock In {\em Experimental Algorithms, 9th International Symposium, {SEA}
  2010}, pages 350--360, 2010.

\bibitem[DFLL12]{DAmbrosioFLL12}
Claudia D'Ambrosio, Antonio Frangioni, Leo Liberti, and Andrea Lodi.
\newblock A storm of feasibility pumps for nonconvex {MINLP}.
\newblock {\em Math. Program.}, 136(2):375--402, 2012.

\bibitem[DMW16]{dey:molinaro:wang:2016}
Santanu~S. Dey, Marco Molinaro, and Qianyi Wang.
\newblock Analysis of sparse cutting-planes for sparse {MILP}s with
  applications to stochastic {MILP}s.
\newblock \url{https://arxiv.org/abs/1601.00198}, 2016.

\bibitem[FGL05]{FischettiGL05}
Matteo Fischetti, Fred Glover, and Andrea Lodi.
\newblock The feasibility pump.
\newblock {\em Math. Program.}, 104(1):91--104, 2005.

\bibitem[FL11]{lodiF:2011}
M.~Fischetti and A.~Lodi.
\newblock Heuristics in mixed integer programming.
\newblock {\em Wiley Encyclopedia of Operations Research and Management
  Science}, 2011.

\bibitem[FS09]{FischettiS09}
Matteo Fischetti and Domenico Salvagnin.
\newblock Feasibility pump 2.0.
\newblock {\em Math. Program. Comput.}, 1(2-3):201--222, 2009.

\bibitem[ILO]{CPLEX}
IBM ILOG.
\newblock {CPLEX} high-performance mathematical programming engine.
\newblock \url{http://www.ibm.com/software/integration/optimization/cplex/}.

\bibitem[KAA{\etalchar{+}}11]{MIPLIB2010}
Thorsten Koch, Tobias Achterberg, Erling Andersen, Oliver Bastert, Timo
  Berthold, Robert~E. Bixby, Emilie Danna, Gerald Gamrath, Ambros~M. Gleixner,
  Stefan Heinz, Andrea Lodi, Hans~D. Mittelmann, Ted~K. Ralphs, Domenico
  Salvagnin, Daniel~E. Steffy, and Kati Wolter.
\newblock {MIPLIB} 2010.
\newblock {\em Math. Program. Comput.}, 3(2):103--163, 2011.

\bibitem[MJPL92]{MintonJPL92}
Steven Minton, Mark~D. Johnston, Andrew~B. Philips, and Philip Laird.
\newblock Minimizing conflicts: {A} heuristic repair method for constraint
  satisfaction and scheduling problems.
\newblock {\em Artif. Intell.}, 58(1-3):161--205, 1992.

\bibitem[MU05]{mitz}
Michael Mitzenmacher and Eli Upfal.
\newblock {\em Probability and Computing: Randomized Algorithms and
  Probabilistic Analysis}.
\newblock Cambridge University Press, New York, USA, 2005.

\bibitem[Pap91]{Papadimitriou91}
Christos~H. Papadimitriou.
\newblock On selecting a satisfying truth assignment (extended abstract).
\newblock In {\em FOCS}. {IEEE} Computer Society, 1991.

\bibitem[Sch86]{SchrijverIntBook}
A.~Schrijver.
\newblock {\em Theory of linear and integer programming}.
\newblock John Wiley \& Sons, Inc., New York, NY, USA, 1986.

\bibitem[Sch99]{Schoning99}
Uwe Sch{\"{o}}ning.
\newblock A probabilistic algorithm for k-sat and constraint satisfaction
  problems.
\newblock In {\em 40th Annual Symposium on Foundations of Computer Science,
  {FOCS} '99, 17-18 October, 1999, New York, NY, {USA}}, pages 410--414. {IEEE}
  Computer Society, 1999.

\bibitem[SLR10]{santis:lu:ri:2010}
Marianna~De Santis, Stefano Lucidi, and Francesco Rinaldi.
\newblock New concave penalty functions for improving the feasibility pump.
\newblock Manuscript, 2010.

\end{thebibliography}


	\newpage
	\appendix
	\noindent {\LARGE \textbf{Appendix}}

\section{Minimal projected certificates can be found in polynomial time} \label{app:minPolytime}

Consider the following LP:
\begin{eqnarray*}
\textup{max}& \lambda A \bar{x} - \lambda b \\
\textup{s.t.} & B^T \lambda = 0 \\
&e^T \lambda = 1\\
& \lambda \geq 0,
\end{eqnarray*}
where $e$ is the all-ones vector. Since we assumed a projected certificate exists, this LP is feasible and has strictly positive optimal value.

An optimal extreme point solution provides a projected certificate that can be computed in polynomial time~\cite{SchrijverIntBook}; we just need to verify that there cannot exist a projected certificate with smaller support.  Let $\lambda^{*}$ be an extreme point optimal solution, and by contradiction assume that $\wt{\lambda}$ gives a projected certificate and is such that $\supp(\wt{\lambda})$ is strictly contained in $\supp(\lambda^*)$. Since $\wt{\lambda} \ge 0$ and also different from 0, by scaling we can assume without loss of generality that $e^T \wt{\lambda} = 1$, and thus $\wt{\lambda}$ is a feasible solution for the LP above. This implies that
\begin{eqnarray*}
 B^T \left(\lambda^{*} - \wt{\lambda} \right) &=& 0\\
e^T \left(\lambda^{*} - \wt{\lambda} \right) &=& 0,
\end{eqnarray*}
so the assumption $\supp(\wt{\lambda}) \subsetneq \supp(\lambda^*)$ implies that the columns of the matrix $\left [\begin{array}{c} B^T\\ e^T\end{array}\right]$ in the support of $\lambda^{*}$ are linearly dependent. But since $\lambda^*$ is an extreme point, it is a basic solution, namely the columns of the matrix in the support of $\lambda^*$ are linearly independent. This reaches a contradiction and concludes the proof.

\section{Original Feasibility Pump stalls even when flipping variables with zero fractionality is allowed}\label{app:3choice}

In Section \ref{sec:contrib} we showed that the original Feasibility Pump without restarts may stall; we now show that this is still the case even if variables with zero fractionality can be flipped in the perturbation step. 

	Let $TT$, the number of variables to be flipped, be randomly selected from the set $[t, T] \cap \mathbb{Z}$, where $T \in \mathbb{Z}_{++}$ is a pre-determined constant in the FP code (independent of the instance).
Moreover assume the reasonable convention that for two variables with equal fractionality, we break ties using their index number, that is, if the $x_i$ and $x_j$ have the same fractionality and $i < j$, then $x_i$ is picked before $x_j$ to be flipped.

	Consider the following subset-sum problem:
	\begin{eqnarray*}
	\textup{max}& x_{T + 2} \\
	\textup{s.t.}&	5x_1 + \dots+ 5x_{T + 1} + 2x_{T + 2}  = 5T + 5\\
	& x_i \in \{0, 1\} \ \forall \ i \in [T + 2]
	\end{eqnarray*}

	Clearly the LP optimal solution $\bar{x}^0$ is of the form $\bar{x}^0_{T + 2} = 1$, $\bar{x}^0_i = \frac{3}{5}$ for some $i \in [T + 1]$  and $\bar{x}^0_j = 1$ for all $j \in [T + 1] \setminus\{ i\}$. Rounding this we obtain $\wt{x}^0$ which is of the form $\wt{x}^0_{T +2} = 1$ and $\wt{x}^0_j = 1$ for all $j \in [T + 1]$. It is also straightforward to verify that $\wt{x}^0$ is a stalling solution (see Definition~\ref{defn:stall}). So that algorithm randomly selects $TT$ from the set $[t, T] \cap \mathbb{Z}$ and flips $TT$ variables. Note that only $x_i$ has a fractionality of $|\frac{3}{5} - 1|$ and all the other variables have a fractionality of $0$ for some $i \in [T + 1]$. So using the convention for breaking ties, we flip $x_i$ and $TT - 1$ other variables. Since $TT \leq T < T +1$, the new point $\wt{x}$ is of the form $\wt{x}_{T + 2} = 1$ and $\wt{x}_j = 0 $ for $j \in S \subseteq [T +1]$ and $ \wt{x}_j = 1$ for $j \in [T +1]\setminus S$. (Note that $S$ can also be $\emptyset$ since we make no assumption on $t$).

First note that $\wt{x}$ is not a feasible solution since $\wt{x}_{T + 2} = 1$. Moreover,
\begin{enumerate}
\item If $S = \emptyset$, then $\wt{x} = \wt{x}^0$, a stalling solution visited before.
\item If $S \neq \emptyset$, then $5\wt{x}_1 + \dots+ 5\wt{x}_{T + 1} + 2\wt{x}_{T + 2} < 5T + 5$ and on projecting to the LP relaxation we will obtain a point of the form of $\bar{x}^0$. Rounding this again gives us $\wt{x}^0$, a stalling solution visited before.
\end{enumerate}
This completes the proof.

\section{No long cycles in stalling}\label{sec:side}
\begin{lemma}\label{prop:convergent}
Suppose that following is a sequence of points visited by Feasibility Pump (without any randomization): 
$$ (\bar x^1, \bar y^1) \rightarrow (\tilde{x}^1, \bar y^1) \rightarrow  (\bar x^2, \bar y^2) \rightarrow (\tilde{x}^2, \bar y^2), $$
where $(\bar x^i, \bar y^i),$ $i \in \{1,2\}$ are the vertices of the LP relaxation, $\tilde{x}^i,$ $i \in \{1,2\}$ are $0-1$ vectors, $\tilde{x}^i = \round(\bar x^i)$ and $(\bar x^2, \bar y^2) = \lproj(\tilde{x}^1, \bar y^1) $. Then,
$$ \|\bar{x}^1 - \tilde{x}^1 \|_1 \geq \|\bar{x}^2 - \tilde{x}^2 \|_1.$$ 
\end{lemma}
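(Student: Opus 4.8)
The plan is to exploit the optimality of the $\ell_1$-projection step. First I would set up notation: we have $\bar x^1$ a vertex of the LP, $\tilde x^1 = \round(\bar x^1)$, then $(\bar x^2, \bar y^2) = \lproj(\tilde x^1, \bar y^1)$, meaning $(\bar x^2, \bar y^2)$ minimizes $\|x - \tilde x^1\|_1$ over $(x,y) \in P$, and finally $\tilde x^2 = \round(\bar x^2)$. The key observation is that for any point $\bar x$ and its rounding $\tilde x = \round(\bar x)$, we have $\|\bar x - \tilde x\|_1 = \sum_i |\bar x_i - \tilde x_i|$ where each term is at most $\tfrac12$ (using the binary nature of the rounded values and that $\bar x \in [0,1]^n$), and moreover $|\bar x_i - \round(\bar x_i)| \le |\bar x_i - c|$ for \emph{any} integer $c$, in particular for $c = \tilde x^1_i$.

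The main chain of inequalities I would write is:
\[
\|\bar x^2 - \tilde x^2\|_1 = \sum_i |\bar x^2_i - \round(\bar x^2_i)| \le \sum_i |\bar x^2_i - \tilde x^1_i| = \|\bar x^2 - \tilde x^1\|_1 \le \|\bar x^1 - \tilde x^1\|_1,
\]
where the first inequality uses that $\round(\bar x^2_i)$ is the nearest integer to $\bar x^2_i$ so it is at least as close as the integer $\tilde x^1_i$, and the last inequality uses that $(\bar x^1, \bar y^1) \in P$ is a feasible point for the projection problem $\lproj(\tilde x^1, \bar y^1)$ while $(\bar x^2, \bar y^2)$ is the \emph{optimal} one, hence $\|\bar x^2 - \tilde x^1\|_1 \le \|\bar x^1 - \tilde x^1\|_1$. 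That gives exactly the claimed bound $\|\bar x^1 - \tilde x^1\|_1 \ge \|\bar x^2 - \tilde x^2\|_1$.

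I do not anticipate a serious obstacle here — the proof is essentially a two-line application of projection optimality plus the definition of rounding to the nearest integer. The one point requiring a touch of care is the tie-breaking convention in $\round$ (rounding $\tfrac12$ to $1$): one must make sure that $|\bar x_i - \round(\bar x_i)| \le |\bar x_i - c|$ still holds for every integer $c$ even at the tie, which it does since at $\bar x_i = \tfrac12$ both integers $0$ and $1$ are equidistant. A secondary remark: this lemma is the ingredient one then iterates to prove Theorem~\ref{thm:sidenote} — if a long cycle existed, the sequence of $\ell_1$-distances $\|\bar x^t - \tilde x^t\|_1$ would be non-increasing along the cycle and hence constant, which combined with the strictness one can extract when $\bar x^{t+1} \neq \bar x^t$ forces a contradiction — but for the lemma itself only the monotonicity displayed above is needed, so I would keep the proof confined to that.
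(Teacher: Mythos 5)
Your proof is correct and follows exactly the paper's argument: the optimality of $(\bar x^2,\bar y^2)$ in the $\ell_1$-projection gives $\|\bar x^2 - \tilde x^1\|_1 \le \|\bar x^1 - \tilde x^1\|_1$, and the fact that coordinatewise rounding yields the closest integer point gives $\|\bar x^2 - \tilde x^2\|_1 \le \|\bar x^2 - \tilde x^1\|_1$. Your extra remark about the tie-breaking at $\tfrac12$ is a harmless (and correct) touch of care not spelled out in the paper.
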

\begin{proof}
This result holds due to the fact that we are sequentially projecting using the same norm. In particular, we have that
$$ \|\bar{x}^1 - \tilde{x}^1 \|_1 \geq \|\bar{x}^2 - \tilde{x}^1 \|_1,$$ 
since $(\bar x^2, \bar y^2) = \lproj(\tilde{x}^1, \bar y^1) $, i.e., $\bar{x}^2$ is a closest point in $l_1$-norm to $\tilde{x}^1$ in the projection of the LP relaxation in the $x$-space. Then 
$$ \|\bar{x}^2 - \tilde{x}^1 \|_1 \geq \|\bar{x}^2 - \tilde{x}^2 \|_1,$$ 
since $\tilde{x}^1$ and $\tilde{x}^2$ are both integer points and $\tilde{x}^2$ is obtained by rounding  $\bar{x}^2$ (and a rounded point is the closest integer point in $\ell_1$ norm).	
\end{proof}

A \emph{long cycle} in feasibility pump is a sequence 
$$ (\bar x^1, \bar y^1) \rightarrow (\tilde{x}^1, \bar y^1) \rightarrow  (\bar x^2, \bar y^2) \rightarrow (\tilde{x}^2, \bar y^2) \rightarrow \dots  (\bar x^k, \bar y^k) \rightarrow (\tilde{x}^k, \bar y^k)$$

where 
\begin{enumerate}
\item $(\bar x^i, \bar y^i),$ $i \in \{1,2, \dots, k\}$ are the vertices of the LP relaxation, $\tilde{x}^i,$ $i \in \{1,2, \dots, k\}$ are $0-1$ vectors, $\tilde{x}^i = \round(\bar x^i)$ and $(\bar x^{i+1}, \bar y^{i+1}) = \lproj(\tilde{x}^i, \bar y^{i})$,
\item $\tilde{x}^1, \tilde{x}^2, \dots, \tilde{x}^{k-1}$ are unique integer vectors, 
\item $\bar{x}^1  = \bar{x}^k$, $\tilde{x}^1 = \tilde{x}^k$, and
\item $k \geq 3$.
\end{enumerate}
The statement of Theorem \ref{thm:sidenote} is that such a scenario cannot occur, assuming $0.5$ is always rounded consistently. 
\begin{proof}[Proof of Theorem \ref{thm:sidenote}] Without loss of generally, we assume that $0.5$ is rounded up to $1$.  Consider the sub-sequence $(\bar x^i, \bar y^i) \rightarrow (\tilde{x}^i, \bar y^i) \rightarrow  (\bar x^{i+1}, \bar y^{i+1}) \rightarrow (\tilde{x}^{i+1}, \bar y^{i+1}) $. By Lemma \ref{prop:convergent}, since there is cycling, we have that 

$$\|\bar{x}^i - \tilde{x}^i \|_1 = \|\bar{x}^{i+1} - \tilde{x}^i \|_1 = \|\bar{x}^{i+1} - \tilde{x}^{i+1} \|_1. $$

For simplicity and without loss of generality, we may assume that $\tilde{x}^i$ is the all ones vector. (This can be achieved by reflecting on coordinates the LP relaxation and the $[0, \ 1]^n$ hypercube. Note that under such mappings, the sequence of points in feasibility pump will not be altered. Moreover, a point with value $0.5$ in some coordinates $I \subseteq [n]$ will be mapped to a point with $0.5$ in the coordinates $I$.)

Let $\emptyset \neq J \subseteq [n]$ be the set of indices where $\tilde{x}^i_j \neq \tilde{x}^{i+1}_j$, that is $\tilde{x}^{i +1}_j = 0$ for all $j \in J$. Since $\|\bar{x}^{i+1} - \tilde{x}^i \|_1 = \|\bar{x}^{i+1} - \tilde{x}^{i+1} \|_1$, we have 
\begin{eqnarray}
\sum_{j = 1}^n (1 - \bar{x}^{i+ 1}_j) &=& \sum_{j \in [n]\setminus J} (1 - \bar{x}^{i+ 1}_j)  + \sum_{j \in J} \bar{x}^{i+ 1}_j  \nonumber \\
\label{eq:sumofflip}\Leftrightarrow  \sum_{j \in J} \bar{x}^{i+ 1}_j &=& \frac{|J|}{2}.
\end{eqnarray}
Now observe that since $\tilde{x}^{i +1}_j = 0$ for $j \in J$, we must have that $\bar{x}^{i +1}_j < 0.5$ for all $j \in J$. This contradicts, (\ref{eq:sumofflip}).
 
\end{proof}
\end{document}